\documentclass[11pt,english]{amsart}
\PassOptionsToPackage{obeyFinal}{todonotes}
\usepackage[T1]{fontenc}
\usepackage[latin9]{inputenc}
\usepackage{babel}
\usepackage{todonotes}
\usepackage{amsthm}
\usepackage{amssymb}
\usepackage{graphicx,xcolor}
\usepackage[letterpaper]{geometry}
\usepackage{pinlabel}
\usepackage[pdfusetitle,
 bookmarks=true,bookmarksnumbered=false,bookmarksopen=false,
 breaklinks=false,pdfborder={0 0 1},backref=false,colorlinks=false]
 {hyperref}
\hypersetup{
 colorlinks=true,linkcolor=teal,citecolor=purple,linktocpage=true}

\usepackage{comment}

\makeatletter
\numberwithin{equation}{section}
\numberwithin{figure}{section}
\usepackage[inline]{enumitem}
\theoremstyle{plain}
\newtheorem{thm}{\protect\theoremname}[section]
\theoremstyle{definition}
\newtheorem{defn}[thm]{\protect\definitionname}
\theoremstyle{plain}
\newtheorem{lem}[thm]{\protect\lemmaname}
\theoremstyle{plain}
\newtheorem{cor}[thm]{\protect\corollaryname}
\theoremstyle{plain}
\newtheorem{conj}[thm]{Conjecture}
\theoremstyle{plain}
\newtheorem{question}[thm]{Question}
\theoremstyle{remark}
\newtheorem{rem}[thm]{\protect\remarkname}
\newtheorem*{rem*}{\protect\remarkname}

\usepackage{xcolor}

\@ifundefined{date}{}{\date{}}
\usepackage{amsmath, tikz-cd, amssymb, placeins}
\usepackage{listings}
\DeclareRobustCommand*\cal{\@fontswitch\relax\mathcal}

\usetikzlibrary{calc}
\usetikzlibrary{decorations.pathmorphing}

\tikzset{curve/.style={settings={#1},to path={(\tikztostart)
    .. controls ($(\tikztostart)!\pv{pos}!(\tikztotarget)!\pv{height}!270:(\tikztotarget)$)
    and ($(\tikztostart)!1-\pv{pos}!(\tikztotarget)!\pv{height}!270:(\tikztotarget)$)
    .. (\tikztotarget)\tikztonodes}},
    settings/.code={\tikzset{quiver/.cd,#1}
        \def\pv##1{\pgfkeysvalueof{/tikz/quiver/##1}}},
    quiver/.cd,pos/.initial=0.35,height/.initial=0}

\tikzset{tail reversed/.code={\pgfsetarrowsstart{tikzcd to}}}
\tikzset{2tail/.code={\pgfsetarrowsstart{Implies[reversed]}}}
\tikzset{2tail reversed/.code={\pgfsetarrowsstart{Implies}}}
\tikzset{no body/.style={/tikz/dash pattern=on 0 off 1mm}}

\makeatother

\providecommand{\corollaryname}{Corollary}
\providecommand{\definitionname}{Definition}
\providecommand{\lemmaname}{Lemma}
\providecommand{\theoremname}{Theorem}
\providecommand{\remarkname}{Remark}

\newlist{eqenumerate}{enumerate}{1}
\setlist[eqenumerate]{
  leftmargin=3\parindent,
  align=left,
  labelwidth=3\parindent,
  labelsep=0pt
}

\newcommand{\eqitem}{
  \refstepcounter{equation}
  \item[(\theequation)]
}

\DeclareMathOperator{\GL}{GL}
\DeclareMathOperator{\partialext}{\partial_{\mathrm{ext}}}
\DeclareMathOperator{\partialint}{\partial_{\mathrm{int}}}

\begin{document}
\title{An irreducible real projective plane in the 4-sphere}

\author{Mark Hughes}
\address{Brigham Young University\\Provo, UT 84602 USA}
\email{hughes@math.byu.edu}

\author{Seungwon Kim}
\address{Sungkyunkwan University\\Suwon, Gyeonggi, 16419 Republic of Korea}
\email{seungwon.kim@skku.edu}

\author{Maggie Miller}
\address{Department of Mathematics, University of Texas at Austin, TX 78712 USA}
\email{maggie.miller.math@gmail.com}

\author{Gheehyun Nahm}
\address{Department of Mathematics, Princeton University, Princeton, New Jersey
08544, USA}
\email{gn4470@math.princeton.edu}

\thanks {MH was supported by the NSF (DMS-2522496).
SK was supported by National Research Foundation of Korea (NRF) grants funded by the Korean government (MSIT) (No.\ 2022R1C1C2004559). MM was partially supported by a Packard Fellowship for Science and Engineering, a Sloan Research Fellowship, and NSF grant DMS-2404810.
GN was partially supported by the ILJU Academy and Culture Foundation, the Simons collaboration \emph{New structures in low-dimensional topology}, and a Princeton Centennial Fellowship.}

\begin{abstract}
    We construct an irreducible embedded projective plane in $S^4$.
    This gives a counterexample to the Kinoshita conjecture and answers Problem~4.37 of the K3 problem list. 
    Moreover, we answer both Questions~(i)~and~(ii) of Problem~4.37:
    (i) the connected sum $R\#R$ is a Klein bottle in $S^4$ with extremal normal Euler number that does not admit an unknotted projective plane summand,
    and (ii) we show that our projective plane $R$ is irreducible by showing that the peripheral map $\pi_1 (\partial (S^4\setminus\mathring{N}(R)))\to \pi_1 (S^4 \setminus \mathring{N}(R))$ has kernel of order $2$.
\end{abstract}

\maketitle

\section{Introduction}\label{sec:intro}

In this paper, we construct the first \emph{irreducible} embedding of the projective plane in $S^4$.

\begin{thm}\label{thm:kinoshita}
There exists a smoothly embedded projective plane in $S^4$ which cannot be topologically decomposed as the connected sum of a knotted $2$-sphere with an unknotted projective~plane.
\end{thm}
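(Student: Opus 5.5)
The plan is to use the invariant suggested in the abstract: the kernel of the peripheral map. For a projective plane $R\subset S^4$, let $X=S^4\setminus\mathring{N}(R)$. The boundary $\partial X$ is the circle bundle over $\mathbb{RP}^2$ with Euler number $\pm 2$, which is the quaternion space $Q_8$-manifold. So $\pi_1(\partial X)$ is a finite group of order $8$, the quaternion group. Set $K(R)=\ker(\pi_1(\partial X)\to\pi_1(X))$.

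First I will show this invariant vanishes on decomposable planes. Suppose $R=P\# S$, where $P$ is an unknotted projective plane and $S$ is a $2$-knot. Then $X$ is a fiber sum (boundary connected sum along a meridional circle) of $X_P$ and the $2$-knot exterior $X_S$. For the unknot $P$, $\pi_1(X_P)=\mathbb{Z}/2$, and the peripheral map is the quotient of $Q_8$ to $\mathbb{Z}/2$ sending the fiber to the meridian. Its kernel has order $4$. By van Kampen, $\pi_1(X)=\pi_1(X_S)$ with the meridian satisfying $\mu^2=1$ adjoined, so $\pi_1(X)=\pi_1(X_S)/\langle\langle\mu^2\rangle\rangle$.

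The peripheral map for $R$ factors through the map for $P$ followed by $\mathbb{Z}/2\to\pi_1(X)$. This second map sends the meridian to $\mu$. It is injective unless $\mu=1$ in $\pi_1(X)$, which holds because $H_1(X)=\mathbb{Z}/2$ is generated by $\mu$. Hence $|K(R)|=4$ for every decomposable $R$. This step is invariant under homeomorphism, so it handles the topological decomposition as well.

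Next I will construct a specific $R$, for example by a banded unlink diagram or a motion picture, perhaps built from a $\mathbb{Z}/2$-equivariant or deform-spun construction. I will compute a presentation of $\pi_1(X)$ by Wirtinger-type generators from the diagram, and also write down the images of the two generators $i,j$ of $Q_8$ as explicit words. The goal is to show the kernel has order $2$, namely $\{1,-1\}$, the center. Equivalently, $i$ and $j$ map to elements of $\pi_1(X)$ that are nontrivial, distinct, with product nontrivial and squares equal to $1$.

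The main obstacle is certifying that these images are nontrivial in a possibly complicated group. My first approach is to find a finite quotient of $\pi_1(X)$, say a representation into a permutation group or into $\mathrm{SL}_2(\mathbb{F}_q)$, in which the images of $i$, $j$ and $ij$ are all nontrivial. This gives the upper bound $|K|\le 2$. For the lower bound I will show that $-1$, the fiber squared, lies in the kernel for all $R$. This should follow from the fiber being a meridian $\mu$ whose square bounds, or directly from the peripheral relation. Since $|K|=2\neq4$, $R$ is not decomposable, which proves Theorem~\ref{thm:kinoshita}.
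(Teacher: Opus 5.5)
Your obstruction half is the same as the paper's Lemma~\ref{lem:peripheral-obstruction}, and it is fine once you note that $\pi_1(\partial N(R))$ is generated by loops on the $P$ side: for $R=P\#S$ the peripheral image is $C_2$, so the kernel in $Q_8$ has order $4$. The genuine gap is that the theorem is an existence statement, and your proposal never produces the projective plane. ``Construct a specific $R$, perhaps deform-spun, then look for a finite quotient'' cannot be executed by default. By Kamada's results, deform-spins of strongly invertible knots (inversion composed with twists and rolls) are reducible. So are deform-spins of torus or hyperbolic knots under \emph{any} orientation-reversing symmetry. For the most natural candidates, then, your search for a quotient in which $i$, $j$, $ij$ all survive is guaranteed to fail. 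Exhibiting an $R$ for which it succeeds is the entire content of the theorem.

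The idea you are missing is the paper's construction together with its matching certificate.
\begin{itemize}
\item \emph{Starting link.} Take an unknotted $P$ linked with an unknotted Klein bottle $Q$ (the half-spin of $p\sqcup q$). Then $\pi_1(S^4\setminus(P\sqcup Q))=C_2\{u\}\times C_2\{v\}$, with $u,v$ lying on $\partial N(Q)$.
\item \emph{Surgery.} Perform generalized knot surgery on $Q$: replace $N(Q)$ by $W=X\cup S^1\times(S^3\setminus\mathring{N}(T_{3,4}))$. Here $X$ is the mapping torus of $r\beta\times r$ on $A\times S^1$, with $\beta=\sigma_1^2\sigma_3\sigma_2\sigma_3^{-1}\sigma_1^{-2}$.
\item \emph{Ambient manifold.} You must check the result is still $S^4$: the fiber becomes a $3$-ball, and Cerf--Palais finishes it.
\item \emph{Certificate.} You must show $C_2\{u\}\times C_2\{v\}$ still injects. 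The paper does this not by a finite quotient of the whole group. It maps, via the explicit $\psi$, to the amalgam $(S_4\times C_2)\ast_{C_2^3}\bigl(C_2\times \pi_1(S^3\setminus T_{3,4})/\langle\langle\mu_K^2,\lambda_K^2\rangle\rangle\bigr)$, and uses that the factors inject into an amalgam (Serre).
\item \emph{Why these choices.} They are forced by the computation. $r\beta$ must act transitively on the punctures so that $\partial_{\mathrm{int}}X$ is a single $3$-torus. The loop $\partial D^2$ must survive in the $r\beta$-coinvariants. $T_{3,4}$ is chosen so that $C_2\{\mu_K\}\times C_2\{\lambda_K\}$ injects into the order-$48$ quotient.
\end{itemize}

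Finally, drop your ``lower bound for all $R$.'' The claim that the meridian squared dies in every projective-plane complement is exactly Price--Roseman's open question, which the paper explicitly leaves unresolved. You do not need it, since $|K|\le 2$ already differs from $4$. For the paper's specific $R$, $v'^2$ is checked to lie in the kernel directly.
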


We say a surface smoothly embedded in $S^4$ is {\emph{reducible}} if it admits an unknotted summand that is not a $2$-sphere. (Note that this is distinct from ``prime,'' and agrees with the definitions of e.g.\ \cite{MR769824,MR1309400}.) Theorem \ref{thm:kinoshita} serves as a counterexample to the following well-known conjecture, which appears in the updated K3 problem list \cite{baykur295k3} as Problem 4.37.

\begin{conj}[{Kinoshita conjecture, \cite[Problem 4.37]{baykur295k3}}]
    Every knotted projective plane in $S^4$ is reducible.
\end{conj}

When a projective plane in $S^4$ is reducible, it is also common to say it is of {\emph{Kinoshita type}} (see e.g.\ \cite{yoshikawa86}). So to rephrase Theorem \ref{thm:kinoshita}, we prove that not every projective plane in $S^4$ is of Kinoshita type.
While the conjecture is now commonly named after the work of Kinoshita \cite{kin61}, it seems to have first appeared as the ``Kinoshita conjecture'' in published 
work of Katanaga--Saeki \cite[Remark 3.7]{katanaga1998embeddings}, who said that the problem was well-known to topologists in Japan by 1997. For example, it appears in a 1986 conference proceedings 
paper of Yoshikawa \cite[Question 1.2]{yoshikawa86}.
An informal version of this problem appeared as early as 1975 in work of Price--Roseman \cite{PriceRoseman1975}, who said ``we don't
know of any embedded projective plane that is not [of Kinoshita type], although they surely must exist.''

In addition to disproving the Kinoshita conjecture, our example also answers a separate question included in the remarks of \cite[Problem 4.37]{baykur295k3},
which asks whether there is an irreducible Klein bottle in $S^4$ with normal Euler number $\pm 4$. Previous examples of irreducible Klein bottles (such as the original examples constructed by Yoshikawa \cite{yoshikawa1998order}) have normal Euler number zero. (A Klein bottle in $S^4$ must have normal Euler number $-4,0$, or $4$ by the Whitney--Massey theorem \cite{MR250331}.)

\begin{thm}[{Answer to \cite[Problem 4.37, Question (i)]{baykur295k3}}]\label{thm:klein}
    There exists a topologically irreducible, smoothly embedded Klein bottle in $S^4$ with normal Euler number $\pm 4$.
\end{thm}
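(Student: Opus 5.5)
The plan is to take $K=R\#R$, where $R$ is the irreducible projective plane of Theorem~\ref{thm:kinoshita}, and to use the criterion from the abstract: the peripheral map $\pi_1(\partial(S^4\setminus\mathring N(R)))\to\pi_1(S^4\setminus\mathring N(R))$ has a kernel of order $2$.

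\textbf{Normal Euler number.} A projective plane in $S^4$ has normal Euler number $\pm 2$, and the normal Euler number is additive under connected sum. Taking $R$ with $e(R)=2$ (or its mirror image, with $e=-2$) gives $e(R\#R)=\pm 4$. Note that $R\#\overline R$ would give $e=0$, so the two summands must have the same sign.

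\textbf{Topological irreducibility.} Suppose $K$ admits a non-sphere unknotted summand, so $K\cong K'\#P$ topologically, with $P$ an unknotted projective plane of Euler number $\pm 2$. (An unknotted Klein bottle or torus summand is impossible, since it would force $K'$ to be a sphere and $K$ itself to be unknotted. That case is excluded because the knot group of $K$, computed by van Kampen as an amalgam of two copies of $\pi_1(S^4\setminus R)$ over $\mathbb Z$, is not $\mathbb Z/2$.)

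The key invariant will be the peripheral structure. Write $X_K$ for the exterior of $K$. Its boundary is the circle bundle over $K$ with Euler number $e(K)$. A meridian-pushoff argument should show that when $P$ is an unknotted summand, some essential curve of $\partial X_K$ becomes trivial in $\pi_1(X_K)$, beyond what is forced. Concretely, the boundary of the exterior of the unknotted $P$ is the quaternion space, whose $\pi_1$ maps onto $\mathbb Z/2$. This should yield a peripheral element, namely the lift of the one-sided curve of the $P$ summand, that lies in the kernel of the peripheral map.

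\textbf{Contradiction.} I would then compute the kernel of $\pi_1(\partial X_K)\to\pi_1(X_K)$ directly from the amalgamated decomposition of $X_K$ along the meridian annulus, using the order-$2$ kernel result for $R$ on each side. The expected conclusion is that the kernel of the peripheral map for $K$ is generated by the two order-$2$ kernel elements coming from the summands. I would then show that no element of this kernel has the form required by an unknotted $\mathbb{RP}^2$ summand. This requires identifying what that form is: an element represented by the boundary of a Möbius band fibre-lift whose pushoff bounds a disk, that is, a specific element of the peripheral group detected by the orientation character and the Euler class.

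\textbf{Main obstacle.} The hard step is the last one: translating "has an unknotted projective plane summand" into a precise peripheral-group condition, and proving that the kernel computed for $R\#R$ is incompatible with it. If that turns out to be too delicate, my fallback is to show that a topological splitting $R\#R\cong K'\#P$ gives $\pi_1(X_K)\cong\pi_1(X_{K'})$ with a compatible peripheral map. One would then argue, via the uniqueness of the order-$2$ kernel elements, that this induces a splitting $R\cong R'\#P$, contradicting Theorem~\ref{thm:kinoshita}.
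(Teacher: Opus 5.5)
\textbf{There is a genuine gap.} Your choice of $R\#R$ and the Euler number bookkeeping are fine. The irreducibility argument, however, breaks at exactly the step you call the main obstacle. You never produce a property that (i) follows from the existence of \emph{some} unknotted $\mathbb{RP}^2$ summand, regardless of how that summand sits relative to the splitting $R\#R$, and (ii) can then be checked for $R\#R$.

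The kernel is the wrong thing to track. For a Klein bottle, $\pi_1(\partial N(\Sigma))$ is infinite: the fiber has infinite order because the base is aspherical. So the abstract's ``kernel of order two'' criterion does not transfer. The order-two kernel element $v'^2$ of each copy of $R$ becomes an element of infinite order in the kernel for $R\#R$.

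Your observation that a lift of the core of the M\"obius band of $P$ must die is correct. But that curve is only defined relative to the unknown decomposition $K'\#P$. Showing that ``no kernel element has the required form'' would require controlling the whole kernel up to automorphisms of $\pi_1(\partial N(\Sigma))$, which you do not attempt.

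The fallback does not work either. Fundamental-group data cannot turn $R\#R\cong K'\#P$ into a geometric splitting $R\cong R'\#P$; that would be a cancellation theorem for surfaces in $S^4$. Moreover, $\pi_1(S^4\setminus(K'\#P))\cong\pi_1(S^4\setminus K')/\langle\langle m^2\rangle\rangle$, not $\pi_1(S^4\setminus K')$.

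Your dismissal of an unknotted Klein bottle summand is also wrong. If $S$ is a knotted sphere and $Q$ an unknotted Klein bottle, then $S\#Q$ is knotted and need not have group $C_2$. The correct remark is that an unknotted Klein bottle is itself a sum of two unknotted projective planes, so this case reduces to the $\mathbb{RP}^2$ case.

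\textbf{The missing idea is to track the \emph{image} instead of the kernel.} Push your ``one-sided curve dies'' observation through Seifert--van Kampen, as in Lemma~\ref{lem:reducible-klein}. This shows that the peripheral map of $K'\#P$ factors through $\langle v,w_1\mid [w_1,v],\,w_1^2v^{k},\,v^2\rangle$, a group of order $4$. So any reducible Klein bottle has \emph{finite} peripheral subgroup, a condition intrinsic to $\Sigma$.

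It then suffices to show that $R\#R$ has infinite peripheral subgroup:
\begin{enumerate}
\item By Lemma~\ref{lem:uv-htpe}, the loops $u_i',v_i'$ of each copy can be homotoped onto $\partial_n N(R_i^\circ)\subset\partial N(\Sigma)$, so they are peripheral.
\item By Theorem~\ref{thm:main-pi1-comp}, they generate $C_2\times C_2$ in $G_i=\pi_1(B^4_i\setminus R_i^\circ)$.
\item Since $\pi_1(S^4\setminus\Sigma)\cong G_1\ast_{C_2}G_2$, Lemma~\ref{lem:amal} shows they generate $(C_2\ast C_2)\times C_2$, which is infinite.
\end{enumerate}
This uses the full injectivity statement of Theorem~\ref{thm:main-pi1-comp} together with the peripherality of $u'$ in the punctured piece. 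The bare fact that the kernel for $R$ has order two is not enough.
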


We do not specify the sign in Theorem \ref{thm:klein} since mirroring $S^4$ preserves irreducibility of the surface. Again, this surface is smoothly embedded but the obstruction to reducibility is locally flat. In fact, the Klein bottle in Theorem \ref{thm:klein} is a connected sum of two copies of the projective plane of Theorem \ref{thm:kinoshita}.

There are many known examples of irreducible surfaces in $S^4$, including tori \cite{MR436150}, Klein bottles of normal Euler number zero \cite{yoshikawa1998order,yoshikawa86}, and links of projective planes \cite{MR1309400}.
For further examples and discussion see e.g.\ \cite{MR474320, MR591977,  MR635591, MR705232, MR769824, Livingston1988,  lidman2025stably,meier2025klein}.

In both Theorem \ref{thm:kinoshita} and Theorem \ref{thm:klein}, the obstruction to reducibility is obtained by considering the {\emph{peripheral subgroup}}, i.e.\ the image of $\pi_1(\partial( S^4\setminus \mathring{N}(F)))\to\pi_1(S^4\setminus\mathring{N}(F))$ for a surface $F$.
By an application of the Seifert--van Kampen theorem,  a reducible projective plane has peripheral subgroup isomorphic to $\mathbb{Z}/2\mathbb{Z}$ (see Lemma \ref{lem:peripheral-obstruction}).
By a similar argument, a reducible Klein bottle must also have finite peripheral subgroup (Lemma \ref{lem:reducible-klein}).
We show that the projective plane $R$ constructed in Theorem \ref{thm:kinoshita} has peripheral subgroup $(\mathbb{Z}/2\mathbb{Z})^2$ and that $R\#R$ has peripheral subgroup of infinite order.

Katanaga--Saeki \cite[Remark~3.7]{katanaga1998embeddings} conjectured that every knotted projective plane in $S^4$ must have peripheral subgroup $\mathbb{Z}/2\mathbb{Z}$,
and this question is also asked in \cite[Problem 4.37, Question~(ii)]{baykur295k3}.
The proof of Theorem \ref{thm:kinoshita} thus disproves Katanaga--Saeki's conjecture and answers \cite[Problem 4.37, Question (ii)]{baykur295k3}.

\begin{cor}[{Answer to \cite[Problem 4.37, Question (ii)]{baykur295k3}}]\label{cor:peripheral}
There exists a smoothly embedded projective plane $R$ in $S^4$ whose peripheral subgroup has order four.
\end{cor}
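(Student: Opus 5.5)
We will produce an explicit projective plane $R$ and compute both the boundary group and the knot group well enough to pin down the peripheral map. First, the local structure. The boundary $\partial(S^4\setminus\mathring N(R))$ is the total space of the circle bundle over $\mathbb{RP}^2$ with Euler number $\pm 2$, since every projective plane in $S^4$ has normal Euler number $\pm 2$ by Whitney--Massey. This is the quaternionic space $Q_8$-quotient of $S^3$, so $\pi_1(\partial) \cong Q_8$, of order $8$. The unique involution of $Q_8$ is $-1$, which is represented by the fiber meridian $\mu$ squared. (Equivalently, the fiber class has order $4$.)

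In the standard unknotted case, $\pi_1(S^4\setminus R_0)=\mathbb{Z}/2$ is generated by the meridian, and the peripheral map $Q_8\to\mathbb{Z}/2$ has kernel of order $4$. Therefore, to get a peripheral subgroup of order four, we need the kernel to be exactly $\{\pm1\}$. In other words, the image must be $Q_8/\{\pm1\}\cong(\mathbb{Z}/2)^2$, matching the description in the introduction. Since every proper quotient of $Q_8$ factors through $Q_8/\{\pm 1\}$, it suffices to show two things:
\begin{itemize}
\item $\mu^2=1$ in $\pi_1(S^4\setminus R)$;
\item the images of the two generators $i,j$ of $Q_8$ are distinct and nontrivial.
\end{itemize}
The first is automatic: the meridian of a nonorientable surface in $S^4$ satisfies $\mu^2=1$ because $H_1=\mathbb{Z}/2$... but more concretely it holds whenever the peripheral map is non-injective. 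Injectivity is impossible since $Q_8$ has no faithful... In practice we verify $\mu^2=1$ directly from the presentation.

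The construction itself is the step I would defer to the paper's explicit example, for instance a banded unlink or motion picture diagram of $R$. From it, we read off a Wirtinger-type presentation of $G=\pi_1(S^4\setminus R)$, together with explicit words for the images of the two peripheral generators $a,b$ of $Q_8$. Here $a$ is the meridian $\mu$, and $b$ is a pushoff of an orientation-reversing loop on $R$, chosen using a framing of the normal bundle along that loop.

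To show the image has order exactly $4$, I would construct a homomorphism $\phi\colon G\to H$ to a finite group $H$, for example a small symmetric or dihedral group found by computer search. We need $\phi(a)$, $\phi(b)$ and $\phi(ab)$ all nontrivial. Because $\pm1\mapsto 1$ automatically in any non-injective image, this gives an image of order at least $4$. For the upper bound, it suffices to check $a^2=1$ in $G$. This follows from the Wirtinger relations: $a^2$ is the central element $-1$, and the band/Möbius relation in the diagram should make the meridian an involution.

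The main obstacle is twofold. First, we must correctly identify the words in $G$ representing the peripheral element $b$, which requires tracking a framed pushoff of the nonorientable core. Second, we must find a finite quotient separating $a$, $b$ and $ab$. I would first try mapping to $S_n$ for small $n$ via a representation search, checking that the meridian maps to an involution and that $b$ maps to an element not in $\{1,\phi(a)\}$.
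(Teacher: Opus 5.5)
\textbf{The gap: no projective plane is constructed.} Your group-theoretic reduction is correct, and it matches the last step of the paper's argument. We have $\pi_1(\partial N(R))\cong Q_8$, and every nontrivial normal subgroup of $Q_8$ contains $-1=\mu^2$. So the peripheral image has order four exactly when $\mu^2=1$ in the complement and two generators and their product survive. The paper gets the lower bound from injectivity of $C_2\{u'\}\times C_2\{v'\}$ and the upper bound from ${v'}^2=1$.

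But the corollary is an existence statement, and you defer the existence of $R$ to ``the paper's explicit example.'' That example is the entire content, so as written the proposal does not prove the statement. There is no standard example to fall back on. Every reducible projective plane has peripheral image of order two (Lemma~\ref{lem:peripheral-obstruction}), and Katanaga--Saeki conjectured the same for all knotted ones. A finite-quotient search also needs an actual diagram to run on.

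What is missing is the construction together with a mechanism that forces the larger image. In the paper this goes as follows.
\begin{itemize}
\item Half-spinning gives an unknotted link $P\sqcup Q$ with $\pi_1(S^4\setminus(P\sqcup Q))\cong C_2\{u\}\times C_2\{v\}$ (Lemma~\ref{lem:pi1-maps-pq}).
\item $N(Q)$ is replaced by $W=X\cup S^1\times(S^3\setminus\mathring N(T_{3,4}))$, where $X$ is the mapping torus of $r\beta\times r$ on $A\times S^1_v$.
\item Cerf--Palais shows the ambient manifold is still $S^4$ (Lemma~\ref{lem:get-s4-back}).
\item Injectivity of $C_2\{u\}\times C_2\{v\}$ (Theorem~\ref{thm:main-pi1-comp}) is proved by van Kampen and a homomorphism $\psi$ from the $X$ side to $S_4\times C_2$ with $\psi(u)=(1\,2)(3\,4)$ and $\psi(v)=t$. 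Serre's lemma (Lemma~\ref{lem:alg-thm2}) is then applied to the amalgam with $C_2\times\pi_1(S^3\setminus K)/\langle\langle\mu_K^2,\lambda_K^2\rangle\rangle$, in which $\mu_K,\lambda_K$ generate $C_2\times C_2$ (Lemma~\ref{lem:t34-injective}).
\end{itemize}
The real obstacles are choosing $\beta$ so that $\gamma_1\gamma_2\gamma_3\gamma_4=\partial D^2$ survives the twisted quotient, and choosing $K$. Identifying the word for your $b$ is not the hard part. Also, the paper never writes down a presentation of $\pi_1(S^4\setminus R)$ or maps it to a finite group. It maps to an infinite amalgam into which $S_4\times C_2$ embeds.

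\textbf{The claim that $\mu^2=1$ is automatic is wrong.} $H_1\cong\mathbb{Z}/2$ only places $\mu^2$ in the commutator subgroup. Your unfinished claim that the peripheral map cannot be injective is unsupported: non-injectivity is equivalent to $\mu^2=1$. That is precisely Price--Roseman's question on the order of the meridian, which the paper explicitly leaves open. There is also no general ``band relation'' making the meridian an involution. The relation has to come from the particular example. In the paper it comes for free: the map from $C_2\{u\}\times C_2\{v\}$ is induced by the inclusion of $S^4\setminus(P\sqcup\mathring N(Q))$, whose fundamental group is already $C_2\{u\}\times C_2\{v\}$.
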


The texts of \cite[Remark~3.7]{katanaga1998embeddings} and \cite[Problem 4.37, Question (ii)]{baykur295k3} phrase this in terms of the order of the kernel of the map $\pi_1(\partial(S^4\setminus\mathring{N}(R)))\to\pi_1(S^4\setminus R)$,
but since $\pi_1(\partial N(R))\cong Q_8$ has finite order the size of this kernel is determined by the order of the peripheral subgroup.
\begin{rem}
The meridian of the projective plane $R$ of Theorem \ref{thm:kinoshita} and Corollary \ref{cor:peripheral} has order two, so the existence of $R$ also negatively answers \cite[Question 1.3]{yoshikawa86}: ``If the order of the meridian of a projective plane $P$ in $\mathbb{R}^4$ is two, then must $P$ be of Kinoshita type?'' (paraphrased from Japanese).
\end{rem}

Price--Roseman \cite[Question 18]{PriceRoseman1975} also asked the following question about peripheral subgroup, to which our examples notably do {\emph{not}} provide an answer.
\begin{question}
    Does the meridian of a locally flat projective plane in $S^4$ always have order two in the complement of the projective plane?
\end{question}

The counterexamples we construct to prove Theorems \ref{thm:kinoshita} and \ref{thm:klein} are smoothly embedded, but the obstruction from the peripheral subgroup holds in the topological category. Thus, we are left with the following relevant question.

\begin{question}
    Does there exist a smoothly embedded projective plane in $S^4$ that is topologically but not smoothly reducible?
\end{question}

\subsection*{A history of spinning}

The projective plane of Theorem \ref{thm:kinoshita} is constructed using a variation of {\emph{spinning}} classical knots in $S^3$, a common construction of surfaces in $S^4$ introduced by Artin \cite{artin1925isotopie}. Artin's spun knots are 2-spheres, as are the generalizations of Zeeman \cite{zeeman1965twisting}, Fox \cite{Fox1966}, and Litherland \cite{litherland1979deforming} of {\emph{deform-spun knots}}, in which a choice of symmetry of the classical knot determines the resulting 2-sphere. Price--Roseman \cite{PriceRoseman1975} introduced the idea of using an orientation-reversing symmetry, with the effect of producing a knotted projective plane in $S^4$.

Kamada \cite{kamada1990deform} showed that deform-spinning a strongly invertible knot using the strong inversion composed with any number of ``twists'' or ``rolls'' (as in \cite{litherland1979deforming}) produces a reducible projective plane. Kamada \cite{kamada1992projective} further showed that deform-spinning a torus knot or hyperbolic knot with {\emph{any}} choice of orientation-reversing diffeomorphism always produces a reducible projective plane. While we do not phrase our example exactly in this language, we will see later (Remark \ref{rem:spinning}) that our projective plane is obtained by deform-spinning a satellite knot, specifically the connected sum $\#_4 T_{3,4}$. The actual deformation relies heavily on the fact that the classical knot is not prime, and is also not a composition of a strong inversion with twists and rolls.

\subsection*{Conventions}\leavevmode
\begin{enumerate}[leftmargin=*]
\item All constructions are done in the smooth category. The tools used in Theorems~\ref{thm:kinoshita} and \ref{thm:klein} obstruct locally flat decomposition.
\item For convenience of notation, we henceforth write $C_n$ to denote the cyclic group of order $n$ (including the case $n=\infty$), and $C_n\{t\}$ to denote the cyclic group of order $n$ with $t$ as the generator.
\item All group operations are written multiplicatively.
\item We choose a standard identification of $S^1$ with $[0,1]/(1\sim 0)$, and similarly typically include a subscript to distinguish separate copies of the circle (e.g.\ $S^1_a, S^1_b$).
\item We compose paths and loops from left to right.
\item We write the pushout of group homomorphisms $H\to G_1, H\to G_2$ as $G_1\ast_H G_2$.
\end{enumerate}

\subsection*{Organization}
In Section \ref{sec:construction}, we go over the construction of a projective plane $R$. In Section \ref{sec:proof}, we show that $R$ and $R\#R$ are irreducible, proving Theorems \ref{thm:kinoshita} and \ref{thm:klein}.

\subsection*{Acknowledgements}
GN thanks Peter Ozsv\'{a}th for his continual support. 
The authors acknowledge the use of ChatGPT, Cursor, and Gemini during
initial exploration and computation. 
All final computations were performed and verified by the authors without the use of AI.
We thank Ikhan Choi for help finding a print reference unavailable online.

\section{\label{sec:construction}The construction}

In this section we construct an irreducible projective plane in $S^4$. 
We view $S^{4}$ as the union of
the \emph{southern hemisphere} $B_S^4$,
a tubular neighborhood $S^3 \times [-1,1]$ of the equatorial $S^3$,
and the \emph{northern hemisphere} $B_N^4$.
Here, $\partial B_{S}^{4}$ (resp.\ $\partial B_{N}^{4}$) is identified with $S^{3}\times\{-1\}$ (resp.\ $S^{3} \times \{1\}$).

\subsection{Half-spinning}

Consider the standard solid
torus $S^{1}\times D^{2}\subset S^{3}$; this gives rise to $S^{1}\times D^{2}\times[-1,1]\subset S^{3}\times[-1,1]$.
Let us identify $S^{1}=[0,1]/(1\sim0)$. Let $\varphi_{\theta}:D^{2}\times[-1,1]\to D^{2}\times[-1,1]$
be rotation by $\theta$ about the axis $\{\mathbf{0}\}\times[-1,1]$, in the direction indicated in Figure \ref{fig:pq-link}(a).
\begin{defn}[Half-spin]\label{def:halfspin}
Given a subset $Y\subset D^{2}\times[-1,1]$ such that $\varphi_{\pi}(Y)=Y$ setwise,
the \emph{half-spin} of $Y$ (in $S^{1}\times D^{2}\times[-1,1]$)
is
\[
H_Y:=\bigcup_{\theta\in[0,1]}\{\theta\}\times\varphi_{\pi\theta}(Y)\subset S^{1}\times D^{2}\times[-1,1].
\]

Given an arc $a\subset D^{2}\times[-1,1]$ such that its two endpoints
are on $D^{2}\times\{-1\}$ and $\varphi_{\pi}(a)=a$, the half-spin
of $a$ in $S^{1}\times D^{2}\times[-1,1]$ viewed as a subset of
$S^{3}\times[-1,1]$ is a M\"obius band whose boundary is an unknot
in $S^{3}\times\{-1\}$. The \emph{half-spin of $a$ in $S^{4}$} 
is the projective plane in $S^{4}$ obtained by capping this M\"obius band off with a boundary-parallel
$D^{2}$ in $B_{S}^{4}$.
\end{defn}

\begin{rem}[Normal Euler number of a half-spin]\label{rem:euler}
The normal Euler number of the projective plane that is the half-spin of an arc $a$ is either $\pm2$, with sign depending on the orientations of $S^1\times D^2\times[-1,1]$ and $B^4_S$. We could negate the normal Euler number by mirroring $S^4$ --- or reversing the direction of the rotation $\varphi_\theta$. Our choice of rotation direction (and hence normal Euler number) is arbitrary, but for the sake of being concrete we will keep the direction fixed.
\end{rem}

\begin{figure}[h]
\begin{centering}
\includegraphics{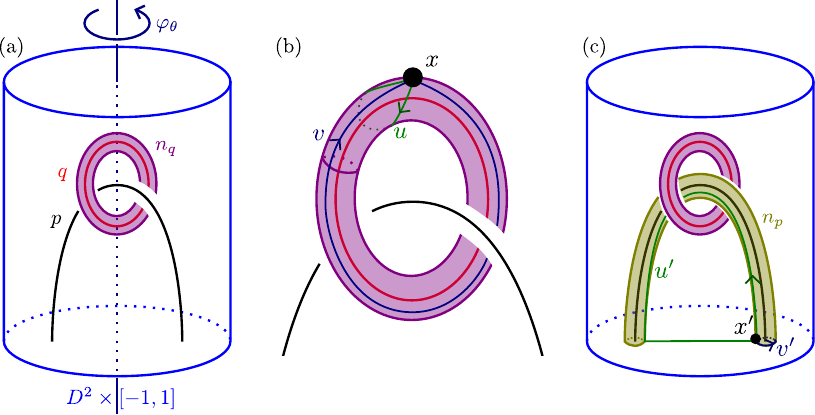}
\par\end{centering}
\caption{\label{fig:pq-link}(a): The half-spin of  $p \sqcup q \subset D^2 \times [-1,1]$ is the link $P\sqcup Q\subset S^{4}$ of an unknotted projective plane $P$ and an unknotted Klein bottle $Q$. 
The half-spin of the solid torus $n_q \supset q$ is a tubular neighborhood $N(Q)$ of $Q$ in $S^4$.
(b): A point $x\in \partial n_q \subset  D^{2}\times[-1,1]$ and loops $u,v$ on $n_q$ based at $x$ that give rise to loops in $\partial N(Q)$ based at $\overline{x}$.
(c): A change of basepoint from (b) to a basepoint $x'\in D^2 \times \{-1\}$.
The union of the southern hemisphere $B_S^4$ and the half-spin of $n_p \supset p$ is a tubular neighborhood $N(P)$ of $P$ in $S^4$.
The loops $u',v'$ are on $\partial n_p$.}
\end{figure}

Consider the half-spin of $p\sqcup q$ of Figure~\ref{fig:pq-link}~(a).
After capping off, this gives a link $P\sqcup Q\subset S^4$ of an unknotted projective plane $P$ and an unknotted Klein bottle $Q$. (As mentioned in Remark~\ref{rem:euler}, the normal Euler number of $P$ is $\pm 2$. The normal Euler number of $Q$ is evidently zero, since $Q$ bounds a solid Klein bottle obtained by half-spinning a disk bounded by $q$.) 
Our irreducible projective plane $R$ in $S^{4}$ will be obtained from a \emph{generalized knot surgery} on $Q$,
i.e.\ we will construct a $4$-manifold $W$ such that $\partial W=\partial N(Q)$ and $(S^{4}\setminus\mathring{N}(Q))\cup_{\partial N(Q)}W\cong S^{4}$.
The irreducible projective plane $R$ will be the image of $P$ in $(S^{4}\setminus\mathring{N}(Q))\cup W\cong S^{4}$.

\begin{rem}[Orientation convention]
In the construction of $W$, which is itself a union of key pieces, we will take orientable manifolds to be unoriented.  The final manifold $(S^4\setminus \mathring{N}(Q)) \cup W$ will then inherit an orientation from $S^4\setminus\mathring{N}(Q)$.
\end{rem}

\subsection{The manifold $X$}\label{sec:X}
Before building $W$, we will define a manifold $X$ with two boundary components. (Then $W$ will be obtained by filling one boundary component of $X$.)

Let $A:=D^{2}\setminus\bigsqcup_{i=1}^{4}\mathring{D_{i}^{2}}$ (see Figure~\ref{fig:a-disk})
and let $\beta$ be the 4-strand braid $\beta:=\sigma_{1}^{2}\sigma_{3}\sigma_{2}\sigma_3^{-1}\sigma_{1}^{-2}\in B_{4}$.
Then $\beta$ induces a diffeomorphism on $A$ by isotopy extension, which we also denote as $\beta:A\to A$. Our conventions are as follows.
\begin{enumerate}[label=(\roman*)]
    \item The braid group $B_4$ acts on $A$ on the left; i.e.,\ in the action of $\beta$ on $A$, we apply $\sigma_{1}^{-2}$ first, then $\sigma_{3}^{-1}$, and so on.
    \item \label{enu:action-gamma}The braid generator $\sigma_i \in B_4 $ acts on $\pi_1 (A,y)$ as follows: $\sigma_{i}\gamma_{i}=\gamma_{i}\gamma_{i+1}\gamma_{i}^{-1}$, $\sigma_{i}\gamma_{i+1}=\gamma_{i}$, and $\sigma_{i}\gamma_{j}=\gamma_{j}$ for $j\neq i,i+1$.
    The action of $\sigma_i ^{-1}$ is as follows: $\sigma_{i}^{-1}\gamma_{i}=\gamma_{i+1}$, $\sigma_{i}^{-1}\gamma_{i+1}=\gamma_{i+1}^{-1}\gamma_{i}\gamma_{i+1}$,
    and $\sigma_{i}^{-1}\gamma_{j}=\gamma_{j}$ for $j\neq i,i+1$.
    \item \label{enu:action-rho}While we isotope the disks $D_i ^2$, we do not rotate them, and so in particular $\beta$ permutes the $y_i$'s. Also, the braid word $\sigma_i$ acts on the paths $\rho_{j}$'s as follows (up to homotopy rel.~$\partial$): $\sigma_{i}\rho_{i}=\gamma_{i}\rho_{i+1}$, $\sigma_{i}\rho_{i+1}=\rho_{i}$, and $\sigma_{i}\rho_{j}=\rho_{j}$ for $j\neq i,i+1$.
    The action of $\sigma_i ^{-1}$ is as follows  (up to homotopy rel.~$\partial$): $\sigma_{i}^{-1}\rho_{i}=\rho_{i+1}$, $\sigma_{i}^{-1}\rho_{i+1}=\gamma_{i+1}^{-1}\rho_{i}$,
    and $\sigma_{i}^{-1}\rho_{j}=\rho_{j}$ for $j\neq i,i+1$.
\end{enumerate}

Let $r:D^2 \to D^2 $ be the horizontal reflection (Figure~\ref{fig:a-disk}); this restricts to a diffeomorphism of $A$ which we also denote as $r$.
We will also write $r$ to denote the restriction to $S^1$, i.e.\ $z\mapsto 1-z$ where $S^1=[0,1]/(1 \sim0)$. 

\begin{rem}\label{rem:transitive}
Denoting the induced
permutation $\{1,2,3,4\}\to\{1,2,3,4\}$ also as $\beta$, we find $\beta$ is the transposition $(2\ 4)$.
Similarly, $r$ induces the permutation $r:\{1,2,3,4\}\to\{1,2,3,4\}$ be $i\mapsto5-i$. Then $r\beta$ is the cycle $(1\ 4\ 3\ 2)$; in particular, $r\beta$ acts transitively on $\{1,2,3,4\}$.
\end{rem}

\begin{rem}[Choice of the braid $\beta$]

Denote as $r\beta:\pi_1 (A,y)\to \pi_1 (A,y)$ the map induced by the diffeomorphism $r\circ \beta: (A,y)\to (A,y)$.
A key property that the braid $\beta$ satisfies is that
the loop $\partial D^2 = \gamma_1 \gamma_2 \gamma_3 \gamma_4 $ is nontrivial in the quotient
\[
\pi_1 (A,y) /\left\langle \left\langle \gamma^{-1} (r\beta (\gamma )) \ \forall\gamma\in \pi_1 (A,y)\right\rangle \right\rangle.
\]
We use this fact in Steps 3 and 4 of the proof of Theorem~\ref{thm:kinoshita}.

\end{rem}

\begin{figure}[h]
\begin{centering}
\includegraphics{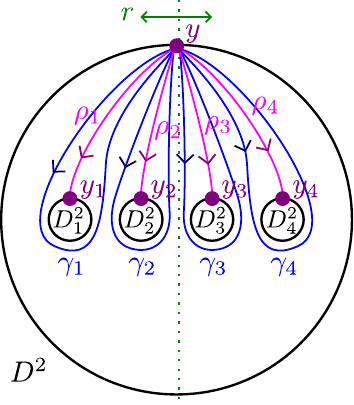}
\par\end{centering}
\caption{\label{fig:a-disk}Points $y$ and $y_i$, paths $\rho_i$, and loops $\gamma_i$ on $A:=D^{2}\setminus\sqcup_{i=1}^{4}\mathring{D_{i}^{2}}$, and the reflection $r:A\to A$. Note that $r(y_i ) = y_{5-i}.$}
\end{figure}

The manifold $X$ will be defined as a mapping torus; let us specify our conventions.
\begin{defn}[Mapping torus]
The \emph{mapping
torus} of a map $\varphi:Y\to Y$ is 
\[
S^1 \times _{\varphi} Y:=([0,1]\times Y)/(1,y)\sim(0,\varphi(y)).
\]
\end{defn}

\begin{rem}[Half-spin as a mapping torus]\label{rem:half-spin-mapping-torus}
For subspaces $Y\subset D^2 \times [-1,1]$ such that $\varphi_\pi (Y) = Y$ setwise,
    we identify the mapping torus $S^1 \times _{\varphi_\pi} Y$ and the half-spin $H_Y \subset S^1 \times D^2 \times [-1,1]$ of $Y$,
    via the homeomorphism
    \[S^1 \times_{\varphi_\pi} Y \to H_Y : (\theta , y ) \mapsto (\theta , \varphi_{\pi \theta} (y)).\]
\end{rem}

Henceforth, we may add subscript to copies of $S^1$ to distinguish different instances from each other. 
Consider the map $r\beta\times r:A\times S_{v}^{1}\to A\times S_{v}^{1}$.
Let $$X:=S^1 \times_{r\beta \times r} (A\times S_{v}^{1})$$ be its mapping torus.
We illustrate $X$ along with several relevant curves in the boundary in Figure \ref{fig:X}.

The boundary of the mapping torus $X$ is a disjoint union of the \emph{external} boundary $\partialext  X$ (which is the mapping torus of $\partial D^2\times S^1_v$) and the \emph{internal} boundary $\partialint X$ (which is the mapping torus of $(\partial A\setminus\partial D)\times S^1_v$).

Since $\beta$ acts trivially on $\partial D^2$, the external boundary of $X$ is
\begin{equation*}\label{eq:ext-bdry}
\partialext X=S^1\times_{r\times r}(\partial D^2\times S^1_v).
\end{equation*}
The internal boundary of $X$ is also a single component, due to the fact that $r\beta$ acts transitively on the components of $\partial A\setminus\partial D$ (Remark \ref{rem:transitive}). More specifically, we have $$\partialint X=S^1\times_{r\beta\times r}((\partial D_1\sqcup\partial D_2\sqcup\partial D_3\sqcup\partial D_4)\times S^1_v)\cong S^1\times _{(r\times r)^4}(S^1\times S^1)=S^1\times S^1\times S^1,$$
since $r$ is an involution.

\definecolor{realgreen}{rgb}{0,.5,0}
\begin{figure}
\labellist
\pinlabel{$r$} at 275 110
\pinlabel{mapping torus} at 145 205
\pinlabel{$\{\text{pt}\}\times\{\text{pt}\}\times S^1_v$} at 195 20
\pinlabel{{\textcolor{red}{\footnotesize{$\lambda_K$}}}} at 49 160
\pinlabel{\textcolor{blue}{$v$}} at 48 30
\pinlabel{\textcolor{blue}{$\mu_K$}} at 105 30
\pinlabel{\textcolor{red}{$u$}} at 38 100
\pinlabel{\textcolor{realgreen}{$[0,1]\times \{y\}\times \{0\}$}} at -30 140
\pinlabel{\textcolor{realgreen}{$=[0,1]\times\{x\}$}} at -40 125
\pinlabel{\footnotesize{\textcolor{realgreen}{$f=g$}}} at 90 188
\pinlabel{$r$} at 195 55
\endlabellist

\includegraphics[width=100mm]{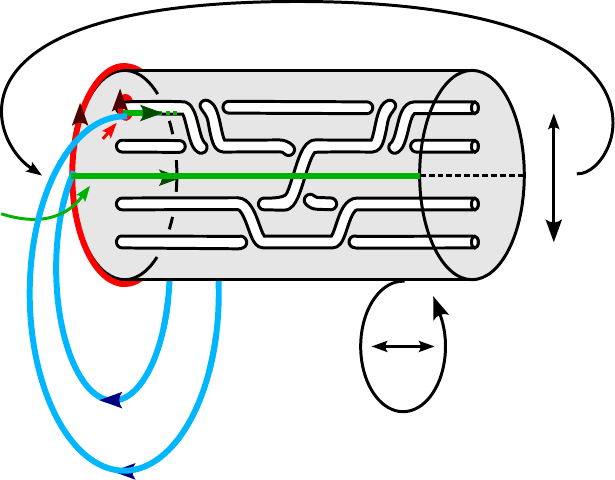}
\caption{The 4-manifold $X=S^1\times_{r\beta\times r}(A\times S^1_v)$. We indicate curves on $\partialint X$ and how they are identified with the boundary of $S^1 \times (S^3\setminus\mathring{N}(K))$ (for $K=T_{3,4}$) in order to form $W$, and also how curves on $\partialext X$ are identified with $S^4\setminus\mathring{N}(Q)$.}\label{fig:X}
\end{figure}

\subsection{\label{subsec:fill-int}Filling the interior boundary of $X$ to form the manifold $W$}

Let $y_i \in \partial D_i ^2$ be the points depicted in Figure~\ref{fig:a-disk}. 
We fix coordinates on $\partialint X=S_{f}^{1}\times S_{a}^{1}\times S_{v}^{1}$, where 
\begin{eqenumerate}
\eqitem\label{enu:intf}$S_{f}^{1}\times \{0\}\times\{0\}$ 
is identified with the loop on $\partialint X$ given by concatenating $[0,1]\times\{y_{1}\}\times \{0\}$, 
$[0,1]\times\{y_{4}\}\times\{0\}$, $[0,1]\times\{y_{3}\}\times\{0\}$,
and $[0,1]\times\{y_{2}\}\times\{0\}$,
\eqitem\label{enu:inta}$\{0\}\times S_{a}^{1}\times\{0\}$
is identified with $\{0\}\times\partial D_1 ^2 \times\{0\}\subset \partialint X$, and
\eqitem\label{enu:intv}$\{0\}\times\{0\}\times S_{v}^{1}$
is identified with $\{0\}\times\{y_{1}\}\times S_{v}^{1}\subset \partialint X$.
\end{eqenumerate}
In particular, the point $(0,y_1 , 0) \in \partialint X$ is identified with the point $(0,0,0)\in S_{f}^{1}\times S_{a}^{1}\times S_{v}^{1}$.

Let $K\subset S^{3}$ be the $(3,4)$ torus knot $T_{3,4}$. 
The boundary of $S^3 \setminus \mathring{N}(K)$ is the torus $S^1 _{\lambda_K} \times S^1 _{\mu _K}$, where $S^1_\lambda\times \{0\}$ is a 0-framed longitude of $K$ and $\{0\}\times\mu_K$ is a meridian of $K$.

Let $W=X\cup (S_{g}^{1}\times(S^{3}\setminus\mathring{N}(K)))$, where we glue $\partial (S^1 _g \times (S^3 \setminus \mathring{N}(K))) = S_{g}^{1}\times S_{\lambda_{K}}^{1}\times S_{\mu_{K}}^{1}$ to $\partial_\mathrm{int} X = S_{f}^{1}\times S_{a}^{1}\times S_{v}^{1}$ 
by identifying the following oriented curves.
\stepcounter{equation}
\begin{align*}
    S^1_f\times\{0\}\times\{0\}&=S^1_g\times\{0\}\times\{0\},\\\{0\}\times S^1_a\times\{0\}
    &=\{0\}\times S^1_{\lambda_{K}}\times\{0\},\tag{\theequation}\label{eq:identify-t3}
    \\
    \{0\}\times\{0\}\times S^1_v&=\{0\}\times\{0\}\times S^1_{\mu_K}.
\end{align*}
Similarly the point $(0,y_1 , 0) \in \partialint X$ is identified with the point $(0,0,0)\in S_{g}^{1}\times S_{\lambda_{K}}^{1}\times S_{\mu_{K}}^{1}$.

Since we are gluing along a 3-torus, whose (not necessarily orientation-preserving) mapping class group is $\GL_3(\mathbb{Z})$ in correspondence with the induced action on first homology, the gluing is determined up to isotopy by these curve identifications.

\begin{rem}[Choice of the knot $K=T_{3,4}$]
The knot $K=T_{3,4}$ is chosen so that the map
$C_2\{\mu_K\}\times C_2\{\lambda_K\}\to\pi_{1}(S^{3}\setminus K)/\left\langle \left\langle \mu^{2}_K,\lambda^{2}_K\right\rangle \right\rangle$ is injective (Lemma~\ref{lem:t34-injective}), where $\mu_K , \lambda_K$ are the meridian and longitude of $K$.
(In the proof of Theorem~\ref{thm:kinoshita}, this is used to show that the map (\ref{eq:use-t34}) is injective.)

\end{rem}

\subsection{Gluing $W$ to $S^4\setminus\mathring{N}(Q)$\label{subsec:gluew}}
We now glue $W$ to the exterior of $S^4\setminus\mathring{N}(Q)$ to form a new closed 4-manifold containing the image of $P$, which we denote as $R$ inside $(S^4\setminus\mathring{N}(Q))\cup W$.

To glue $W$ and $S^4 \setminus \mathring{N} (Q)$, let us specify an identification between $\partialext X=\partial W$ and $\partial N(Q)$.
Consider the solid torus $n_q \subset D^2 \times [-1,1]$ of Figure~\ref{fig:pq-link};
we take $N(Q)$ as the half-spin of $n_q$.
In Figure~\ref{fig:pq-link}~(b), we label two curves in $\partial n_q$.
The curve $u$ is a meridian of $q$, and the curve $v$ is a 0-framed longitude of $q$.
Let us put $D^2\times S^1$ coordinates on $n_q$ where $u$ is $\partial D^2\times\{0\}$ and $v$ is $\{0\}\times S^1$.
The rotation $\varphi_\pi$ restricts to $n_q$ as the map $r\times r: D^2\times S^1\to D^2\times S^1$.
Since $N(Q)$ is the half-spin of $n_q$, we have (recall Remark~\ref{rem:half-spin-mapping-torus})
\[
N(Q) \cong S^1 \times _{\varphi_\pi } n_q = S^1 \times_{r\times r} (D^2 \times S^1).
\]
Now, our identification $\partial W \cong \partial N(Q)$ is given by 
\[
\partial W = \partialext X = S^1\times_{r\times r}(\partial D^2\times S^1_v) \cong  S^1 \times _{r\times r} (\partial D^2 \times S^1) = \partial N(Q),
\]
where the $\cong$ is given by the coordinatewise identification $\partial D^2 \times S^1 _v \cong \partial D^2  \times S^1 $.

For clarity in later computation, we will identify some oriented curves in $\partial W$ and $\partial(S^4\setminus\mathring{N}(Q))$ that are identified by this gluing map.

Let $x\in \partial n_q \subset D^{2}\times[-1,1]$ be the point indicated in Figure~\ref{fig:pq-link}~(b).
Note that $\varphi_{\pi}(x)=x$.
Let $\overline{x}$ denote the corresponding point $(0,x)\in \partial N(Q)$, which is contained in $S^{1}\times D^{2}\times[-1,1]\subset S^{4}$. Identify the $D^{2}\times[-1,1]$ of Figure~\ref{fig:pq-link}~(b) with $\{0\}\times D^{2}\times[-1,1]$.
We will call the loops $u,v$ in $\partial n_q$ by the same names even when viewed as curves in $\partial(S^4\setminus\mathring{N}(Q))$. 
Let $y\in \partial D^2$ be the point depicted in Figure~\ref{fig:a-disk}. 

The given identification between $\partialext X$ and $\partial N(Q)$, as illustrated in Figure \ref{fig:X}, identifies basepoints $(0,y,0)$ in $\partial W$ with $\overline{x}$ in $\partial(S^4\setminus\mathring{N}(Q))$. The gluing also identifies the following pairs of oriented curves in $\partial W$ and $\partial(S^4\setminus\mathring{N}(Q))$.
\stepcounter{equation}
\begin{align*}
\{0\}\times \partial D^2 \times \{0\}&=u,\\
\{0\}\times \{y\} \times S_v^1&=v,\tag{\theequation}\label{eq:extgluing}
\\
[0,1]\times \{y\} \times \{0\}&=[0,1]\times \{x\}.
\end{align*}
Note both $[0,1]\times\{y\} \times \{ 0\} \subset\partial W$ and $[0,1]\times\{x\}\subset\partial(S^4\setminus\mathring{N}(Q))$ are closed loops.

\subsection{Understanding the new ambient manifold (is $S^4$)}\label{sec:ambientstills4}

We must now understand the topology of the 4-manifold $(S^{4}\setminus\mathring{N}(Q))\cup W$ that contains the projective plane $R$. While we took $X$ and $W$ to be unoriented (but orientable), the union $(S^{4}\setminus\mathring{N}(Q))\cup W$ inherits an orientation from $S^4\setminus\mathring{N}(Q)$.
We claim this manifold is diffeomorphic to $S^4$.

\begin{lem}[Generalized knot surgery gives back $S^4$]
\label{lem:get-s4-back}
The union $((S^1 \times D^2 \times [-1,1])\setminus \mathring{N}(Q))\cup W$ and $S^1 \times D^2 \times [-1,1]$ are diffeomorphic.
Hence, $(S^{4}\setminus\mathring{N}(Q))\cup W$ is diffeomorphic to $S^4$.
\end{lem}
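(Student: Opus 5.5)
The plan is to show that the glued manifold is again a mapping torus over $S^1$ whose fiber is a $3$-ball, with monodromy agreeing with $\varphi_\pi$ near the boundary. Then Cerf's theorem ($\mathrm{Diff}(B^3\text{ rel }\partial)$ is connected) identifies it with $S^1\times_{\varphi_\pi}(D^2\times[-1,1])=S^1\times D^2\times[-1,1]$. The second sentence of the lemma follows by gluing back $B^4_S\cup B^4_N$, since the diffeomorphism will be the identity near $\partial(S^1\times D^2\times[-1,1])$.

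\emph{Step 1: $W$ as a mapping torus.} Let $E_K=S^3\setminus\mathring N(K)$ and let $\iota:E_K\to E_K$ be the strong inversion of $T_{3,4}$. On $\partial E_K$, $\iota$ acts by $\lambda_K\mapsto-\lambda_K$, $\mu_K\mapsto-\mu_K$, so under the identifications \eqref{eq:identify-t3} it restricts to $r\times r$ on each $\partial D_i\times S^1_v$. We may arrange that the basepoint $(0,0)\in S^1_{\lambda_K}\times S^1_{\mu_K}$ is an $\iota$-fixed point corresponding to $y_i\times\{0\}$.

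Put
\[
Z:=(A\times S^1_v)\cup\bigsqcup_{i=1}^4 E_K^{(i)},
\]
with $\partial E_K^{(i)}$ glued to $\partial D_i\times S^1_v$. Extend $r\beta\times r$ to $\psi:Z\to Z$ by sending $E_K^{(i)}$ to $E_K^{(r\beta(i))}$ via $\iota$. Since $r\beta$ is a $4$-cycle (Remark~\ref{rem:transitive}) and $\iota^4=\mathrm{id}$, we get $S^1\times_\psi\bigl(\bigsqcup E_K^{(i)}\bigr)\cong S^1\times_{\iota^4}E_K=S^1\times E_K$. Here the $S^1$ factor is the orbit of the fixed basepoint, which is exactly the loop $S^1_f$ of \eqref{enu:intf}. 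Checking the three curve identifications of \eqref{eq:identify-t3} shows $W\cong S^1\times_\psi Z$.

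I expect this bookkeeping to be the main obstacle. It requires matching $S^1_f$, $S^1_a$, $S^1_v$ with the section and fiber curves, and confirming that the induced map on $H_1(T^3)$ is the prescribed one, so the gluing agrees up to isotopy.

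\emph{Step 2: the fiber is a ball.} By \eqref{eq:extgluing} and the description $N(Q)=S^1\times_{r\times r}(D^2\times S^1)$, the gluing of $W$ to the complement is fiberwise. Hence the union is $S^1\times_\Psi B$, where
\[
B=\bigl((D^2\times[-1,1])\setminus\mathring n_q\bigr)\cup Z
\]
and $\Psi$ equals $\varphi_\pi$ outside $n_q$. Concretely, $B$ is obtained from $D^2\times[-1,1]$ by removing neighborhoods of four parallel $0$-framed copies $q_1,\dots,q_4$ of the unknot $q$. Each removed neighborhood is replaced by $E_K$, with $\mu_K$ glued to the longitude of $q_i$ and $\lambda_K$ to its meridian.

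Replace one component at a time. The exterior of $q_1$ is a solid torus whose meridian disk is bounded by the longitude of $q_1$. Gluing $E_K$ with $\mu_K$ sent to that meridian is the trivial filling, so it returns a $3$-ball. The remaining $q_j$, being $0$-framed parallel pushoffs, become parallel meridians of $K$. These bound disjoint disks, so they again form a $0$-framed trivial parallel family in a ball, and induction shows $B\cong B^3$. This diffeomorphism can be chosen to be the identity near $\partial(D^2\times[-1,1])$.

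\emph{Step 3: conclude.} Transport $\Psi$ to a diffeomorphism of $D^2\times[-1,1]$ that agrees with $\varphi_\pi$ near the boundary. Then $\varphi_\pi^{-1}\Psi$ is a diffeomorphism of $B^3$ that is the identity near $\partial B^3$. By Cerf ($\Gamma_4=0$), it is isotopic to the identity rel boundary. Hence $S^1\times_\Psi B\cong S^1\times_{\varphi_\pi}(D^2\times[-1,1])=S^1\times D^2\times[-1,1]$, by a diffeomorphism that is the identity near the boundary, which proves both statements.
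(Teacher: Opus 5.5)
Your proof is correct and takes essentially the same route as the paper. Both arguments write the union as a mapping torus whose fiber $B$ is $D^2\times[-1,1]$ with four parallel $0$-framed copies of $q$ replaced by copies of $S^3\setminus\mathring{N}(K)$; this $B$ is a ball because filling along $\mu_K$ is trivial, and Cerf's theorem finishes the proof. The only differences are minor: you extend the monodromy over the knot exteriors by the strong inversion of $T_{3,4}$, whereas the paper uses identity maps and alternates the gluings $\iota_i$ and $(r_i\times r)\circ\iota_i$ by the parity of $i$, which needs no symmetry of $K$ since the $4$-cycle $r\beta$ alternates parity; and you keep the diffeomorphism fixed near the boundary and extend by the identity over the complement (which is $D^2\times S^2$, not just $B^4_S\cup B^4_N$), whereas the paper uses that every gluing of $S^1\times B^3$ to $D^2\times S^2$ gives $S^4$.
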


\begin{proof}
Let us first give an alternative description of $W$. Recall $K$ is the torus knot $T_{3,4}$. Let $Y$ be
the $3$-manifold given by gluing four copies of $S^{3}\setminus\mathring{N}(K)$ to $A\times S_{v}^{1}$ as follows. 
Denote the four copies as $(S^{3}\setminus\mathring{N}(K))_{i}$ for $i=1,2,3,4$, and let $\iota_i :\partial(S^{3}\setminus\mathring{N}(K))_i\to\partial D_{i}^{2}\times S_{v}^{1}$
be a diffeomorphism with $\lambda_{K}\mapsto\partial D_{i}^{2}\times\{0\}$
and $\mu_{K}\mapsto\{y_i \}\times S_{v}^{1}$. (Note this determines a diffeomorphism up to isotopy.) 
Let $r_i :D_i ^2 \to D_i ^2 $ be the horizontal reflection. 
Glue $(S^{3}\setminus\mathring{N}(K))_{i}$
to $\partial D_{i}^{2}\times S_{v}^{1}$; we use the gluing map $\iota_i$
for $i=1,3$, and the gluing map $(r_i\times r)\circ\iota_i $ for $i=2,4$.

Let $r\beta:Y\to Y$ be the diffeomorphism given by $r\beta\times r$ on
$A\times S_{v}^{1}$ and
the map $\mathrm{Id}:(S^{3}\setminus\mathring{N}(K))_{i}\to(S^{3}\setminus\mathring{N}(K))_{r\beta (i)}$ on  $(S^{3}\setminus\mathring{N}(K))_{i}$ ($i=1,2,3,4$).
Then $W$ is diffeomorphic
to the mapping torus $S^1 \times _{r\beta\times r} Y$, under which $X\subset W$
is identified with $S^1 \times _{r\beta\times r} (A\times S_v^1)$, as usual.

Consider the solid torus $n_q\subset D^{2}\times[-1,1]$ of Figure~\ref{fig:pq-link}~(a).
Recall that $\varphi_{\pi}(n_q)=n_q$ and that its half-spin is $N(Q)$.
In Subsection~\ref{subsec:gluew}, we made the identification $n_q\cong D^2 \times S^1$.
Let $B:=(D^2 \times [-1,1]\setminus\mathring{n_q})\cup Y$,
where we glue them along $\partial n_q \cong \partial D^2 \times S^1 $ and $\partial Y = \partial D^2 \times S^1 $ by the identity.
Let $\eta:B\to B$
be the diffeomorphism given by $\varphi_{\pi}$ on $B\setminus\mathring{n_q}$
and $r\beta\times r$ on $Y$.
Note that attaching a $3$-dimensional $2$-handle to $S^3 \setminus \mathring{N} (K)$ along the meridian $\mu_K$ results in a $3$-ball.
Using this observation $4$ times, we have that $B$ is a $3$-ball.
Hence $\eta$ is isotopic
to $\mathrm{Id}_B$ by Cerf--Palais \cite{MR140120, MR117741},
and so the following are diffeomorphic:
\[((S^1 \times D^2 \times [-1,1])\setminus \mathring{N}(Q))\cup W \cong S^1 \times _\eta B\cong S^1 \times_{\mathrm{Id}_B} B \cong S^1 \times D^2 \times [-1,1].\]

For the last sentence of Lemma~\ref{lem:get-s4-back}, note that the complement of $S^1 \times D^2 \times [-1,1]\cong S^1\times B^3$ in $S^4$ is $D^2 \times S^2$.
Thus we have
\begin{align*}
(S^{4}\setminus\mathring{N}(Q))\cup W &
\cong \left ( ((S^1 \times D^2 \times [-1,1])\setminus \mathring{N}(Q))\cup W \right) \cup (D^2 \times S^2 ) \\
&\cong (S^1 \times B^3 ) \cup (D^2 \times S^2 ).
\end{align*}
The last sentence follows since $(S^1 \times B^3 ) \cup (D^2 \times S^2 )$ is diffeomorphic to $S^4$ (regardless of the diffeomorphism of $S^1 \times S^2$ we use to glue $S^1 \times B^3 $ and $D^2 \times S^2 $).
\end{proof}

The following is our main theorem, which will be proved in Subsection~\ref{subsec:irreducible-p2}.

\begin{thm}[An irreducible projective plane in $S^4$]\label{thm:main-thm}
The image of $P$ in $(S^{4}\setminus\mathring{N}(Q))\cup W\cong S^{4}$
is an irreducible projective plane in $S^4$.
\end{thm}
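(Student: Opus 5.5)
The plan is to use the peripheral subgroup obstruction (Lemma~\ref{lem:peripheral-obstruction}): a reducible projective plane has peripheral subgroup $C_2$. It therefore suffices to show that the image of $\pi_1(\partial N(R))\cong Q_8$ in $\pi_1(S^4\setminus\mathring N(R))$ has order $4$, i.e.\ is $C_2\times C_2$. Writing $E:=S^4\setminus\mathring N(R)$, we have $E=(S^4\setminus \mathring N(P\sqcup Q))\cup_{\partial N(Q)} W$.

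First I will compute $\pi_1$ of each piece by Seifert--van Kampen.
\begin{itemize}
\item The link exterior $S^4\setminus\mathring N(P\sqcup Q)$ is obtained from the half-spin description. Its group is an HNN/mapping-torus type group built from $\pi_1(D^2\times[-1,1]\setminus(p\cup q))$, with the extra relations coming from capping $P$ in $B^4_S$. I will record explicitly the images of $u,v,[0,1]\times\{x\}$ from \eqref{eq:extgluing}, together with the peripheral loops $u',v'$ of $P$ from Figure~\ref{fig:pq-link}(c).
\item For $W$, I use the mapping torus structure $S^1\times_{r\beta\times r}(A\times S^1_v)$. This gives $\pi_1 X=(\pi_1A\times \mathbb Z\{v\})\rtimes \mathbb Z\{t\}$, where $t$ acts by $r\beta$ on $\pi_1 A$ and inverts $v$.
\item Filling the internal boundary via \eqref{eq:identify-t3} amalgamates over $\mathbb Z^3$ with $\mathbb Z\times\pi_1(S^3\setminus K)$. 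This identifies $t^4$ (the loop $S^1_f$) with the central $g$, $\gamma_1$ (up to conjugation) with $\lambda_K$, and $v$ with $\mu_K$.
\end{itemize}

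To prove that the peripheral image is at least order $4$, the natural strategy is to map $\pi_1(E)$ onto a tractable quotient $G$ in which the images of the peripheral generators can be computed. I will add relations killing enough so that the computation closes up. The key relation is setting $\gamma=r\beta(\gamma)$ in $\pi_1A$, i.e.\ passing to the coinvariants appearing in the remark on the choice of $\beta$; in this quotient $\partial D^2=\gamma_1\gamma_2\gamma_3\gamma_4$ survives. I will also impose $\mu_K^2=\lambda_K^2=1$, since $v$ corresponds to the meridian of $Q$ and should square to the appropriate element. By Lemma~\ref{lem:t34-injective}, $C_2\{\mu_K\}\times C_2\{\lambda_K\}$ then injects into the resulting quotient. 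I expect to build $G$ as an amalgam in which this injectivity propagates, by normal form for amalgamated products, to show that the images of the two peripheral generators of $P$ (the meridian $\mu_P$ and a loop represented by $u$ or $\partial D^2$ after tracing through the half-spin) generate a subgroup of order exactly $4$, that is, $C_2\times C_2$ rather than $C_2$. This is the injectivity of the map labeled (\ref{eq:use-t34}).

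For the upper bound, I will check that relations coming from the Möbius band of $P$ and the cap in $B^4_S$ force the images of $Q_8$ to commute and to have order dividing $2$. Hence the peripheral image is a quotient of $Q_8/\{\pm1\}\cong C_2^2$ in $\pi_1 E$, and therefore exactly $C_2^2\neq C_2$, which gives irreducibility.

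The main obstacle is the lower bound: tracking basepoints and the twisting of $r$ on $S^1_v$ through the gluings carefully enough to identify precisely which element of the $T_{3,4}$ group the second peripheral generator of $P$ maps to, and then verifying injectivity through the iterated amalgams. I would first try to construct an explicit finite-or-small target, possibly via a representation of $\pi_1(S^3\setminus K)/\langle\langle\mu^2,\lambda^2\rangle\rangle$, in which everything is computable.
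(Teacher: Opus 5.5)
Your outline has the same skeleton as the paper: peripheral obstruction, Seifert--van Kampen along $\partial N(Q)$ and $\partialint X$, killing $[0,1]\times\{x\}$, Lemma~\ref{lem:t34-injective} on the knot side, and injectivity into an amalgam. But the step that actually carries the proof is left as ``add relations so that the computation closes up,'' and the relation you single out does not suffice.

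To invoke Lemma~\ref{lem:t34-injective} you must impose $\lambda_K^2=1$ and also deal with the central, infinite-order $g$ (the paper imposes $g^2=1$). Since $a=\gamma_1$ and $f$ are amalgamated with $\lambda_K$ and $g$, this forces $\gamma_1^2=1$ and $f^2=1$ on the $X$ side as well. It is in \emph{that} quotient that you must check two things: that $\langle f,a,v\rangle$ maps injectively, so the amalgam argument of Lemma~\ref{lem:alg-thm2} applies, and that $\langle u,v\rangle\cong C_2\times C_2$.

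The coinvariant quotient $F_4/\langle\langle\gamma^{-1}r\beta(\gamma)\rangle\rangle\times C_2\{v\}$ certifies neither. It does not kill $\gamma_1^2$. Knowing $\partial D^2\neq e$ there also gives no control over the edge group, and that control is exactly what lets $\partial D^2$ survive the gluing of $S^1_g\times(S^3\setminus\mathring{N}(K))$.

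What your plan lacks is the paper's Step~4. After killing $\gamma_i^2$, it defines an explicit homomorphism $\psi$ to $S_4\times C_2\{t\}$ by $\gamma_1\mapsto(1\ 2)$, $\gamma_2,\gamma_3\mapsto(2\ 3)$, $\gamma_4\mapsto(3\ 4)$, $v\mapsto t$. It checks that $\psi$ kills every $\gamma_i\,r\beta(\gamma_i)$ and computes $\psi(f)=(3\ 4)$, $\psi(a)=(1\ 2)$, $\psi(u)=(1\ 2)(3\ 4)$. Your suggestion of a representation of $\pi_1(S^3\setminus K)/\langle\langle\mu_K^2,\lambda_K^2\rangle\rangle$ aims at the wrong side. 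That side is already settled by Lemma~\ref{lem:t34-injective}; what is needed is a finite representation of the braid-monodromy side $X$.

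Two bookkeeping errors would also mislead the computation.
\begin{enumerate}
\item The loop $S^1_f$ is not $t^4$ but $\delta_1\delta_4\delta_3\delta_2$. Once $t=[0,1]\times\{x\}$ and the $\gamma_i^2$ are killed, it becomes the word $\gamma_4\gamma_3\gamma_4\gamma_2\gamma_1\gamma_3\gamma_4\gamma_2\gamma_1\gamma_2\gamma_4\gamma_2\gamma_1$. This word is nontrivial, since its image under $\psi$ is $(3\ 4)$, so treating it as $t^4=e$ would wrongly kill $g$.
\item The loop $v$ is, up to change of basepoint, the meridian $v'$ of $P$, not of $Q$; the meridian of $Q$ is $u$. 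The relation $v^2=e$ holds because $r$ reverses $S^1_v$ once $t$ is trivial.
\end{enumerate}
Finally, your upper-bound argument is harmless but unnecessary. Lemma~\ref{lem:peripheral-obstruction} only needs the peripheral image to have order different from $2$, so the lower bound of $4$ is all that must be proved.
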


\begin{rem}[Our example is a deform-spun projective plane]\label{rem:spinning}
The proof of Lemma~\ref{lem:get-s4-back} shows that our irreducible projective plane from Theorem~\ref{thm:main-thm} is a deform-spin (\cite[Section III]{PriceRoseman1975},  \cite[Section 6.4]{MR3588325}; c.f.\ \cite{litherland1979deforming}) of $\#_4 T_{3,4}$,
the connected sum of four copies of the torus knot $T_{3,4}$. The deformation encodes the braid $\beta$, and notably is {\emph{not}} a combination of rolls and twists (composed with a strong involution), as remarked in the history of spinning included in Section~\ref{sec:intro}.

\begin{rem}[Alternative description]
Alternatively, our irreducible projective plane can be obtained from $P$ by first performing a satellite operation to the Klein bottle $Q$ (yielding a torus $T$) and then performing knot surgery on $T$.
In Figure \ref{fig:linkoftorusrp2},  we present a banded unlink diagram (\cite{kssnormalform}, c.f.\ \cite{bandedunlink}) of $P\sqcup T$.
\end{rem}
\begin{figure}
\labellist
\pinlabel{\huge{$\frac{1}{2}$}} at 159 162
\pinlabel{\Large{$P$}} at 55 15
\pinlabel{\Large{$T$}} at 17 179
\endlabellist
    \includegraphics[width=110mm]{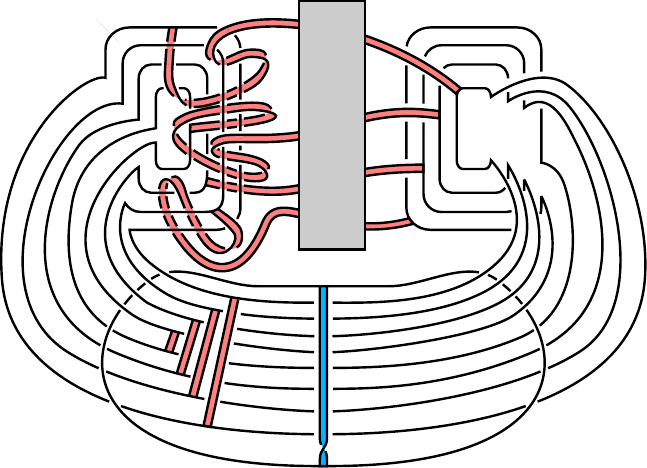}
    \caption{An unknotted projective plane $P$ (here with negative normal Euler number) and a torus $T$. Performing knot surgery along $T$ in $(S^4, P)$ yields $(S^4, R)$. The twisting of the bands in the upper left of the diagram represents the braid $\beta$ (this is the effect of applying $\beta$ to the four unlink strands with bands leaving straight to the right).}\label{fig:linkoftorusrp2}
\end{figure}
\end{rem}

\section{\label{sec:proof}The proofs}
\subsection{\label{subsec:irreducible-p2}An irreducible projective plane in $S^4$}
Let $R$ be the image of $P$ in $(S^{4}\setminus\mathring{N}(Q))\cup W\cong S^{4}$.
In this subsection we show that $R$ is irreducible and hence prove Theorem~\ref{thm:kinoshita}.
To show that $R$ is irreducible, we use Lemma~\ref{lem:peripheral-obstruction}
(this appears in e.g.\ \cite[Remark~3.7]{katanaga1998embeddings}).

\begin{lem}
\label{lem:peripheral-obstruction}
    If a projective plane $R\subset S^4$ is reducible, then the image of the peripheral map $\pi_1 (\partial N(R))\to \pi _1 (S^4 \setminus R)$ has order two.
\end{lem}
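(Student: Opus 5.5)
Write $R = S \# P_0$, where $S$ is a knotted $2$-sphere and $P_0$ is an unknotted projective plane; this is what reducibility of a projective plane means. We then decompose the exterior $E(R)=S^4\setminus\mathring N(R)$ along the $3$-sphere realizing the connected sum and apply Seifert--van Kampen.

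\textbf{Splitting the exterior.} The connected sum is realized by a $3$-sphere $\Sigma\subset S^4$ meeting $R$ in an unknotted circle. This gives
\[E(R)=E_1\cup_{A} E_2,\]
with the following pieces.
\begin{itemize}
\item $E_1$ is the exterior of $S$ with a neighborhood of a disk removed, so $E_1\simeq E(S)$.
\item $E_2$ is the exterior of $P_0$ with a neighborhood of a disk removed, so $E_2\simeq E(P_0)$.
\item $A=\Sigma\setminus \mathring N(\text{unknot})\cong S^1\times D^2$ has $\pi_1(A)=\mathbb Z$, generated by a meridian $m$.
\end{itemize}
Since $P_0$ is unknotted, $\pi_1(E(P_0))\cong C_2$, generated by the meridian. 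Seifert--van Kampen then gives
\[\pi_1(E(R))\cong \pi_1(E(S))\ast_{\mathbb Z} C_2\cong \pi_1(E(S))/\langle\langle m^2\rangle\rangle,\]
where $m$ is the meridian of $S$.

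\textbf{The peripheral subgroup is generated by the meridian.} We have $\partial N(R)\cong \partial N(S)^\circ\cup \partial N(P_0)^\circ$, glued along a torus $\partial A$.
\begin{itemize}
\item The piece $\partial N(S)^\circ$ is a circle bundle over a disk, so its $\pi_1$ is generated by the meridian.
\item The piece $\partial N(P_0)^\circ$ is a circle bundle over a M\"obius band. Its $\pi_1$ is generated by the fiber meridian $m$ together with a lift $c$ of the core of the M\"obius band.
\end{itemize}
So the image of $\pi_1(\partial N(R))$ in $\pi_1(E(R))$ is generated by the images of $m$ and $c$, and both of these factor through $\pi_1(E_2)\cong C_2=\langle m\rangle$. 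Hence the peripheral image is contained in the image of $\langle m\rangle$, which has order at most $2$.

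\textbf{The meridian is nontrivial.} For the lower bound, note that $H_1(E(R);\mathbb Z)\cong \mathbb Z/2$ for any projective plane in $S^4$ (Alexander duality), and this group is generated by the meridian. So $m$ maps to a nontrivial element of $\pi_1(E(R))$, and the image has order exactly two.

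\textbf{Expected obstacle.} The main care needed is the second step. One must check that the peripheral loops in $\partial N(P_0)$, in particular the lift of the M\"obius core, really land inside $\langle m\rangle$. This holds because they are already loops in $E_2$, whose $\pi_1$ is $C_2$. The check does require choosing basepoints and paths consistently across $\Sigma$.

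Finally, the argument is purely topological: it uses only a locally flat splitting sphere and Seifert--van Kampen. So the conclusion obstructs topological (locally flat) reducibility as well, matching the remarks in the introduction.
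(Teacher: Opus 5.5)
Your proof is correct and follows essentially the same route as the paper's. You split along the connected-sum $3$-sphere, observe that $\pi_1(\partial N(R))$ is generated by loops on the unknotted-projective-plane side (your observation that the $2$-sphere side of the boundary is a circle bundle over a disk, with $\pi_1$ generated by the meridian, is exactly the justification for the paper's claim that $\pi_1(\partial N(R)\cap B^4_2)$ surjects onto $\pi_1(\partial N(R))$), and conclude that the image factors through $\pi_1(B^4_2\setminus P^\circ)\cong C_2$; your Alexander duality argument supplies the nontriviality of the meridian that the paper simply asserts, while your computation $\pi_1(E(R))\cong\pi_1(E(S))/\langle\langle m^2\rangle\rangle$ is correct but not needed.
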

\begin{proof}
    Suppose $R$ is a connected sum of a knotted 2-sphere $K$ and an unknotted projective plane $P$, so $(S^4,R)\cong-(B^4_1,K^\circ)\cup(B^4_2,P^\circ)$, where the superscript `$^\circ$' denotes a tangle obtained by deleting a trivial disk. Take the basepoint $z$ of $\pi_1(S^4\setminus N(R))$ to be on the boundary and in $B^4_2$, so that $z$ is also a basepoint for $\pi_1(B^4_2\setminus N(P^\circ))$. Note $\pi_1(\partial N(R)\cap B^4_2)$ surjects via inclusion to $\pi_1(\partial N(R))$, i.e.\ the boundary fundamental group is generated by curves in $B^4_2$. Since $\pi_1(B^4_2\setminus P^\circ )\cong C_2$, we conclude that the image of $\pi_1 (\partial N(R))\to \pi _1 (S^4 \setminus R)$ has order at most two --- and it must be equal to two, since the meridian maps to a nontrivial element.
\end{proof}

Recall from Subsection~\ref{subsec:gluew} that we identify the $D^{2}\times[-1,1]$ of Figure~\ref{fig:pq-link} with $\{0\}\times D^{2}\times[-1,1]\subset S^{1}\times D^{2}\times[-1,1]\subset S^{4}$.
The basepoint for our fundamental group computations will be $\overline{x}:=(0,x)\in S^{1}\times D^{2}\times[-1,1]\subset S^{4}$.
Also recall that we view the loops $u,v$ from Figure~\ref{fig:pq-link}~(b) as loops in $\partial N(Q)$ (and hence in $S^{4}\setminus\mathring{N}(P\sqcup Q)$) based at $\overline{x}$.

\begin{lem}
\label{lem:pi1-maps-pq}Let $w:= [0,1]\times \{x\}\subset \partial N(Q)$. The map $\pi_{1}(\partial N(Q),\overline{x})\to\pi_{1}(S^{4}\setminus(P\sqcup Q),\overline{x})$ is
\[\left\langle u,v,w\middle|[u,v],wuw^{-1}u,wvw^{-1}v\right\rangle \to C_2 \{u\} \times C_2 \{ v\}: u\mapsto u,\ v\mapsto v ,\ w\mapsto e,\] where $e$ denotes the identity element.
\end{lem}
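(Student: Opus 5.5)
We treat the two sides of the map separately and then check where each generator goes.

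\emph{Presentation of $\pi_1(\partial N(Q),\overline{x})$.} We have $\partial N(Q)\cong S^1\times_{r\times r}(\partial D^2\times S^1)$, the mapping torus of the torus $T^2=\partial n_q$ under $r\times r$. The basepoint $x$ is fixed by $\varphi_\pi$, so $w=[0,1]\times\{x\}$ is a closed loop. The standard mapping torus presentation then gives generators $u,v$ for $\pi_1(T^2,x)$ and $w$, with the relation $[u,v]$ and the relations $w\gamma w^{-1}=(r\times r)_*(\gamma)$. Since $r$ reverses both circle factors, $(r\times r)_*$ sends $u\mapsto u^{-1}$ and $v\mapsto v^{-1}$. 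This yields exactly the relators $wuw^{-1}u$ and $wvw^{-1}v$; depending on the left-to-right convention the relation may read $w^{-1}uw=u^{-1}$, but the two forms are equivalent.

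\emph{Computation of $\pi_1(S^4\setminus(P\sqcup Q))$.} The link $P\sqcup Q$ is the half-spin of $p\sqcup q$, capped by a boundary-parallel disk in $B_S^4$. We decompose $S^4\setminus(P\sqcup Q)$ as follows:
\begin{enumerate}[label=(\alph*)]
\item the complement of the half-spin inside $S^1\times D^2\times[-1,1]$, which is the mapping torus of $\varphi_\pi$ on $D^2\times[-1,1]\setminus(p\sqcup q)$;
\item the piece $B_S^4$ minus the trivial disk, with fundamental group $\mathbb{Z}$;
\item the complementary $D^2\times S^2$, which is simply connected.
\end{enumerate}
The arcs $p$ and $q$ form a trivial tangle, so $\pi_1(D^2\times[-1,1]\setminus(p\sqcup q))$ is free on meridians $m_p$ and $m_q=u$. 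Changing the basepoint to $x'$ as in Figure~\ref{fig:pq-link}(c) makes the spin loop $[0,1]\times\{x'\}$ convenient.

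Van Kampen's theorem then runs as follows. Gluing on $D^2\times S^2$ kills the spin loop $w'$, up to a path correction from the basepoint change. The mapping torus relations become $m=\varphi_{\pi*}(m)$, which for meridians reads $m=m^{-1}$, i.e.\ $m^2=1$. The capping disk identifies the meridian of $p$ with the generator coming from $B_S^4$. One should obtain $C_2\{m_p\}\times C_2\{u\}$, with commutativity following from the relations. Since this abstract group is only a guess at this stage, we also check directly that $u$ and $v$ land where the statement says.

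\emph{Images of the generators.}
\begin{itemize}
\item $u$ is a meridian of $Q$, so it maps to the order-two meridian generator.
\item $v$ is the $0$-framed longitude of $q$. In $D^2\times[-1,1]\setminus(p\sqcup q)$ it is homotopic to a meridian of $p$, or a product of meridians, because $q$ winds around $p$ as in Figure~\ref{fig:pq-link}(a). We read off from the figure that $v$ equals the meridian of $P$ up to conjugation; this is what makes $\{u,v\}$ generate $C_2\times C_2$.
\item $w$ is isotopic to the spin circle through a point of the equatorial region, which bounds a disk in the complementary $D^2\times S^2$ or in $B_N^4$. Its image is therefore trivial once the basepoint path from $x$ to $x'$ is accounted for.
\end{itemize}

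The main obstacle is the bookkeeping in the middle step. We must track the basepoint change from $\overline{x}$ to $x'$ (Figure~\ref{fig:pq-link}(c)), where $u'$ and $v'$ lie on $\partial n_p$, so that $u\mapsto u$ and $v\mapsto v$ literally, and so that $w$ becomes null rather than a nontrivial element of the form $uv$. To handle this, we would express $u,v,w$ at $x'$ via the loops $u',v'$, verify that the spin loop based at $x'$ bounds a disk in the hemisphere caps, and confirm that $u$ and $v$ commute in the target.
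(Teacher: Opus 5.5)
There is a genuine gap, and it lies in the step you call ``only a guess'': you never establish that the target is $C_2\{u\}\times C_2\{v\}$, and the input you propose for it is wrong. The object $q$ is a circle, not an arc. Its half-spin is the Klein bottle $Q$, $n_q$ is a solid torus, and $v$ is a $0$-framed longitude of $q$; moreover $q$ is a meridian circle of the arc $p$. Hence $(D^2\times[-1,1])\setminus(p\sqcup q)$ is homotopy equivalent to a torus (the inclusion of $\partial n_q$ is a homotopy equivalence), and
\[
\pi_1((D^2\times[-1,1])\setminus(p\sqcup q),x)=C_\infty\{u\}\times C_\infty\{v\},
\]
with $\varphi_\pi(u)=u^{-1}$ and $\varphi_\pi(v)=v^{-1}$. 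This group is not free on $m_p$ and $u$: $v$ is homotopic to a meridian of $p$, and it commutes with $u$ because both lie on $\partial n_q$. If the group really were free on two meridians, the relations $m=m^{-1}$ would give $C_2\ast C_2$, which is infinite, and nothing would force commutativity. Checking where $u,v,w$ go cannot substitute for this computation, because the content of the lemma is exactly that the target is the group of order four generated by those images.

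Your handling of the rest of $S^4$ and of $w$ also needs repair. Pieces (b) and (c) overlap, since $B_S^4$ lies inside the complementary $D^2\times S^2$, and you do not analyze the gluing region $(S^1\times S^2)\setminus U$. The clean route is to split $D^2\times S^2$ into two parts:
\begin{itemize}
\item a $2$-handle attached along $S^1\times\{z\}$ with $z=(\mathbf{0},1)$, which misses $\partial p$;
\item a $4$-handle containing the capping disk.
\end{itemize}
The $4$-handle minus a boundary-parallel disk contributes nothing to $\pi_1$. So $S^4\setminus(P\sqcup Q)$ is homotopy equivalent to the mapping torus of $\varphi_\pi$ on $(D^2\times[-1,1])\setminus\mathring{N}(p\sqcup q)$ with a single $2$-cell attached along $[0,1]\times\{z\}$. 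A $\varphi_\pi$-invariant path from $x$ to $z$ sweeps out an annulus in the mapping torus, showing that $w$ is conjugate to $[0,1]\times\{z\}$ and hence trivial. The mapping-torus relations then collapse to $\xi=\varphi_\pi(\xi)$, i.e.\ $u^2=v^2=1$ in $\mathbb{Z}^2$.

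No change of basepoint to $x'$ is needed. In fact $[0,1]\times\{x'\}$ is not even a closed loop, since $x'$ lies near an endpoint of $p$ and is moved by $\varphi_\pi$.
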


\begin{proof}
Consider the arc $p$ and circle $q$ in $D^2 \times [-1,1]$ of Figure~\ref{fig:pq-link}~(a). 
Then, 
\[
\pi_{1}((D^{2}\times[-1,1])\setminus(p\sqcup q),x)= C_{\infty} \{u\} \times C_\infty \{ v\},
\]
and $\varphi_{\pi}(u)=u^{-1}$, $\varphi_{\pi}(v)=v^{-1}$.

Let $z:=(\mathbf{0},1)\in D^{2}\times[-1,1]$; note that $\varphi_{\pi}(z)=z$.
Consider the mapping torus of $\varphi_{\pi}$ on $(D^{2}\times[-1,1])\setminus\mathring{N}(p\sqcup q)$
and attach a $2$-cell along $[0,1]\times \{z\}$; this is homotopy equivalent
to $S^{4}\setminus(P\sqcup Q)$. Since there is a path from $x$ to $z$ in $D^2 \times [-1,1] \setminus (p\sqcup q)$ that is invariant under $\varphi_\pi$, we find that $[0,1]\times \{ x\} = w$ is nullhomotopic in $S^4 \setminus (P\sqcup Q)$. Hence, 
\begin{align*}
\pi_{1}(S^{4}\setminus(P\sqcup Q),\overline{x}) & \cong\bigg(\pi_{1}((D^{2}\times[-1,1])\setminus(p\sqcup q),x)\bigg)/\left\langle \left\langle \xi^{-1}\varphi_{\pi}(\xi)\ \forall \xi \right\rangle \right\rangle \\
 & =C_2 \{u\} \times C_2 \{ v\}.\qedhere
\end{align*}
\end{proof}

The following is the key fundamental group computation.

\begin{thm}[Main computation]
\label{thm:main-pi1-comp}The map 
\[
C_2 \{u\} \times C_2 \{ v\} \to 
\pi_{1}\left(\left(S^{4}\setminus(P\sqcup\mathring{N}(Q))\right)\cup W,\overline{x}\right)
\]
is injective.
\end{thm}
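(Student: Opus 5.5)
We plan to compute the fundamental group of $M:=\left(S^{4}\setminus(P\sqcup\mathring{N}(Q))\right)\cup W$ via Seifert--van Kampen, gluing along $\partial N(Q)=\partialext X$. By Lemma~\ref{lem:pi1-maps-pq}, $\pi_1(S^4\setminus(P\sqcup \mathring N(Q)),\overline{x})$ is $C_2\{u\}\times C_2\{v\}$, and the boundary map kills $w$. So
\[
\pi_1(M,\overline{x})\cong \pi_1(W,(0,y,0))\big/\left\langle\left\langle u^2,\ v^2,\ w\right\rangle\right\rangle ,
\]
using the identifications (\ref{eq:extgluing}): $u=\partial D^2$, $v$ the $S^1_v$ fiber, and $w$ the loop $[0,1]\times\{y\}\times\{0\}$. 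Since $C_2\{u\}\times C_2\{v\}$ surjects from the boundary, it suffices to exhibit a homomorphism from this quotient group to some group in which the images of $u$ and $v$ generate a copy of $C_2\times C_2$.

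Next we compute $\pi_1(W)$. First, $\pi_1(X)$ is an HNN extension / semidirect product: $\pi_1(A\times S^1_v)=\pi_1(A,y)\times C_\infty\{v\}$ (free of rank four times $\mathbb{Z}$), and the mapping torus adds the stable letter $w$ with $w\,\xi\, w^{-1}=(r\beta\times r)_*(\xi)$ (up to the left/right convention). Thus $\gamma\mapsto r\beta(\gamma)$ and $v\mapsto v^{-1}$. Then we glue $S^1_g\times(S^3\setminus\mathring N(K))$ along the inner $3$-torus via (\ref{eq:identify-t3}): $v$ is identified with $\mu_K$, the loop $\gamma_1$ (conjugated by $\rho_1$) with $\lambda_K$, and the loop $f$ with $g$, which is central in the knot-group factor. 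Killing $w$ forces $\gamma=r\beta(\gamma)$ for every $\gamma\in\pi_1(A,y)$ and $v=v^{-1}$. Moreover $f$ is a product of $w$ with the transport paths $\rho_i$, so we must also track what $f$ becomes. The resulting presentation is
\[
\Big(\pi_1(A,y)\big/\langle\langle \gamma^{-1}r\beta(\gamma)\rangle\rangle\Big)\times\text{stuff},
\]
amalgamated with $\pi_1(S^3\setminus K)\times C_\infty\{g\}$, where we impose $\mu_K=v$, $\lambda_K=\gamma_1$ (suitably conjugated), $g=f$, and $u=\gamma_1\gamma_2\gamma_3\gamma_4$ with $u^2=v^2=e$.

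To prove injectivity we will construct an explicit target. We send the $A$-part to $C_2\{\bar u\}$ through the quotient $\pi_1(A,y)/\langle\langle\gamma^{-1}r\beta(\gamma)\rangle\rangle$. By the remark on the choice of $\beta$, $\partial D^2=\gamma_1\gamma_2\gamma_3\gamma_4$ survives there; we need a concrete computation showing that this coinvariant quotient maps onto $\mathbb{Z}$ or $C_2$ with $\partial D^2\mapsto$ a generator, while $\gamma_1$ maps to an element of order dividing $2$. The knot-group part is handled through $\pi_1(S^3\setminus K)/\langle\langle\mu_K^2,\lambda_K^2\rangle\rangle$, into which $C_2\{\mu_K\}\times C_2\{\lambda_K\}$ injects by Lemma~\ref{lem:t34-injective}. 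We then combine the two parts as an amalgamated product over $C_2\times C_2$, where amalgams of injective maps embed the factors, and we send $g,f$ to the identity.

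The main obstacle is the bookkeeping of the coinvariant quotient. We must compute $r\beta$ on $\gamma_i$ and $\rho_i$ via conventions (ii)--(iii), and determine the images of $\lambda_K$ (that is, the $\gamma_i$ conjugated by the $\rho_i$) and of $f$ (the product of the $\rho$-corrected $w$-arcs). We also need to check that these are consistent with sending $u\mapsto\bar u$ nontrivially, $v\mapsto\mu_K$, and making $\lambda_K$ independent from $\bar u$. The aim is to land in $(C_2\{\mu\}\times C_2\{\lambda\})$-amalgams in which $u$ and $v$ stay independent of order $2$. We will first try to map $\gamma_1$ to $u$, and otherwise map the $\lambda_K$-images to $\lambda$ so that $u\mapsto\lambda$-type elements, checking that all relations hold.
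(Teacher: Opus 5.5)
Your skeleton matches the paper's. Seifert--van Kampen along $\partial N(Q)$ reduces everything to $\pi_1(W,\overline{x})/\langle\langle w\rangle\rangle$. A second Seifert--van Kampen along the inner $3$-torus writes this as an amalgam of $\pi_1(X,\overline{x})/\langle\langle w\rangle\rangle$ with $C_\infty\{g\}\times\pi_1(S^3\setminus K)$ over $\langle f,a,v\rangle$. One concludes by mapping each factor to a group in which the amalgamated subgroup embeds, using Lemma~\ref{lem:t34-injective} on the knot side.

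\textbf{The gap: no abelian quotient can detect $u$.} The whole difficulty is the homomorphism on the $X$-side, and the one concrete mechanism you propose for it cannot work. You ask for a map from $\pi_1(A,y)/\langle\langle\gamma^{-1}r\beta(\gamma)\rangle\rangle$ onto $\mathbb{Z}$ or $C_2$ sending $\partial D^2$ to a generator.
\begin{itemize}
\item Since $r$ is orientation-reversing, $r\beta$ acts on $H_1(A)\cong\mathbb{Z}^4$ by $[\gamma_i]\mapsto-[\gamma_{r\beta(i)}]$, where $r\beta=(1\,4\,3\,2)$.
\item So the abelianization of the coinvariant quotient is $\mathbb{Z}$, with $[\gamma_1]=-[\gamma_2]=[\gamma_3]=-[\gamma_4]$.
\item Hence $[\partial D^2]=[\gamma_1\gamma_2\gamma_3\gamma_4]=0$. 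Mod $2$ this is visible directly from the explicit words for $r\beta(\gamma_i)$: all four $\gamma_i$ become equal and $u\mapsto\bar\gamma^{4}=e$.
\end{itemize}
Therefore every homomorphism from $\pi_1(X,\overline{x})/\langle\langle w\rangle\rangle$ to an abelian group kills $u$. The nontriviality of $\partial D^2$ in the coinvariant quotient is a genuinely nonabelian phenomenon. Your fallback (``map the $\lambda_K$-images to $\lambda$ so that $u\mapsto\lambda$-type elements'') names no target group. ``Checking that all relations hold'' is exactly the content that is missing.

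\textbf{What is needed.} You need an explicit nonabelian finite quotient of the $X$-side. The paper uses $\psi:\pi_1(X,\overline{x})/\langle\langle w\rangle\rangle\to S_4\times C_2\{t\}$, induced by $\gamma_1\mapsto(1\,2)$, $\gamma_2,\gamma_3\mapsto(2\,3)$, $\gamma_4\mapsto(3\,4)$, $v\mapsto t$.
\begin{itemize}
\item One verifies $\gamma_i\,r\beta(\gamma_i)\mapsto e$ in $\ast_{i=1}^4 C_2\{\gamma_i\}$, so $\psi$ is well defined.
\item Tracking $f$ through the loops $\delta_i$ and the action of $\beta$ on the arcs $\rho_i$ gives $\psi(u)=(1\,2)(3\,4)$, $\psi(a)=(1\,2)$, $\psi(f)=(3\,4)$, $\psi(v)=t$.
\end{itemize}
Note that $f$ is \emph{not} killed. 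Instead $f,a,v$ go onto a copy of $C_2^3$. This matches $C_2\{g\}\times C_2\{\lambda_K\}\times C_2\{\mu_K\}$ inside $C_2\{g\}\times\pi_1(S^3\setminus K)/\langle\langle\mu_K^2,\lambda_K^2\rangle\rangle$. Lemmas~\ref{lem:alg-lemma1} and~\ref{lem:alg-thm2} then embed $S_4\times C_2\{t\}$ in the resulting amalgam, where $u$ and $v$ generate $C_2\times C_2$.

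Your variant of sending $f$ and $g$ to the identity and amalgamating over $C_2\times C_2$ would be legitimate only if you found a representation detecting $u$ in which $f$ also dies. That is an additional constraint you have not checked, and the paper's $\psi$ does not satisfy it.
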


\begin{proof}[Proof of Theorems~\ref{thm:kinoshita}~and~\ref{thm:main-thm} assuming Theorem~\ref{thm:main-pi1-comp}]
We will show that Theorem~\ref{thm:main-pi1-comp} implies that the image of the peripheral map $\pi_1 (\partial N(R))\to \pi _1 (S^4 \setminus R)$ has order at least $4$.
Once this is established, Theorem~\ref{thm:main-thm}, and hence Theorem~\ref{thm:kinoshita}, follows from Lemma~\ref{lem:peripheral-obstruction}.

We need the basepoint to be on $\partial N(R)$. For ease of visualization, let us change the basepoint from $\overline{x}$ to $\overline{x'}$:
consider the $x',u',v'$ of Figure~\ref{fig:pq-link}~(c), let $\overline{x'}:=(0,x')\in S^{1}\times D^{2}\times[-1,1]\subset S^{4}$, and let $u',v'$ be the corresponding loops in $\{0\}\times D^{2}\times[-1,1]\subset S^{1}\times D^{2}\times[-1,1]\subset S^{4}$ based at $\overline{x'}$.
By Theorem~\ref{thm:main-pi1-comp},
\begin{equation}\label{eq:pi1-inj-change-bastpt}
C_2 \{u'\} \times C_2 \{ v'\} \to 
\pi_{1}\left(\left(S^{4}\setminus(P\sqcup\mathring{N}(Q))\right)\cup W,\overline{x'}\right)
\end{equation}
is injective.

Let $N(P)$ be the union of the southern hemisphere $B_S ^4$ and the half-spin of the neighborhood $n_p \subset D^2 \times [-1,1]$ of $p$ depicted in Figure~\ref{fig:pq-link}~(c).
Then, $N(P)$ is a tubular neighborhood of $P$ (and is disjoint from $N(Q)$),
and the basepoint $\overline{x'}$ and the loops $u', v'$ lie on $\partial N(P)$.
Hence the image of  
\begin{equation}\label{eq:pi1-bdry-ext}
\pi_1 (\partial N(P),\overline{x'})\to \pi_{1}\left(\left(S^{4}\setminus(P\sqcup\mathring{N}(Q))\right)\cup W,\overline{x'}\right)
\end{equation}
contains the image of the map~(\ref{eq:pi1-inj-change-bastpt}), and hence has order at least $4$.
In fact, the image of the map~(\ref{eq:pi1-bdry-ext}) has order $4$ since ${v'}^2$ is in the kernel;
the kernel is the center $\{\pm1\} \subset Q_8 \cong \pi_1 (\partial N(P))$.
\end{proof}

Let us record some additional lemmas before we prove Theorem~\ref{thm:main-pi1-comp}.

\begin{lem}
\label{lem:t34-injective}Let $\mu_K, \lambda_K \in \pi_{1}(S^{3}\setminus K)$ be the meridian and longitude of $K=T_{3,4}$.
Then, the map $C_2\{\mu_K\}\times C_2\{\lambda_K\}\to\pi_{1}(S^{3}\setminus K)/\left\langle \left\langle \mu^{2}_K,\lambda^{2}_K\right\rangle \right\rangle$ is injective.
\end{lem}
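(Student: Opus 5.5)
The plan is to split injectivity into two independent detections, one of which is trivial. The target of $C_2\{\mu_K\}\times C_2\{\lambda_K\}$ has four elements $e,\mu_K,\lambda_K,\mu_K\lambda_K$, and $\mu_K,\lambda_K$ commute in the peripheral torus. So it suffices to find two homomorphisms out of $G:=\pi_1(S^3\setminus K)/\langle\langle \mu_K^2,\lambda_K^2\rangle\rangle$:
\begin{enumerate}
\item the abelianization $G\to H_1(S^3\setminus K)/\langle 2\mu_K\rangle = C_2$, which sends $\mu_K$ to the generator and $\lambda_K$ to $e$, since the longitude is null-homologous;
\item some homomorphism $\psi:G\to F$ to a finite group with $\psi(\lambda_K)\neq e$.
\end{enumerate}
The first map separates $\{\mu_K,\mu_K\lambda_K\}$ from $\{e,\lambda_K\}$. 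Then $\psi$ separates $\lambda_K$ from $e$, and also $\mu_K\lambda_K$ from $\mu_K$ (since $\psi(\mu_K\lambda_K)=\psi(\mu_K)\psi(\lambda_K)\ne\psi(\mu_K)$). Together these show that all four elements have distinct images.

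To construct $\psi$, I will use the torus-knot presentation $\pi_1(S^3\setminus T_{3,4})=\langle x,y\mid x^3=y^4\rangle$. Here $x^3=y^4$ generates the center, and the meridian is $\mu_K=y^{a}x^{b}$ for integers $a,b$ with $3a+4b=\pm1$ (say $\mu_K=y^{-1}x$). The fibre-framed longitude is $x^3$, and the $0$-framed longitude differs from it by the power $\mu_K^{-12}$ (the framing of the cable annulus is $pq=12$). Hence $\lambda_K = x^3\mu_K^{-12}$ up to conventions, and modulo $\mu_K^2$ we get $\lambda_K\equiv x^3$. So it suffices to find a finite group $F$ with elements $X,Y$ satisfying
\begin{itemize}
\item $X^3=Y^4=z$, where $z$ is central of order exactly $2$;
\item $(Y^{-1}X)^2=e$.
\end{itemize}
Then $\psi(\lambda_K)=z\neq e$, and $\psi(\lambda_K)^2=e$ holds automatically.

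Natural candidates for $F$ are groups with a central involution $-1$: the binary octahedral group $2O\subset SU(2)$, or $\GL_2(\mathbb{F}_3)$, or more generally groups of the form $\langle X,Y\rangle$ in $\GL_2$ or $\mathrm{SL}_2$ of a small finite field. In those groups order-$6$ elements cube to $-1$ and order-$8$ elements have fourth power $-1$. In $SU(2)$ the only involution is $-1$, so there the condition becomes $Y^{-1}X=\pm1$, which is impossible for elements of different orders. I would therefore first search in $\GL_2(\mathbb{F}_3)$, or in a product such as $\GL_2(\mathbb{F}_3)\times C_2$ if needed. Concretely, I would fix $X$ of order $6$ with $X^3=-I$ and look for $Y=X\tau$ with $\tau$ a noncentral involution and $(X\tau)^4=-I$. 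Alternatively, since the relations only involve $\mu_K$ and $x^3$, I could map to a dihedral-type or generalized quaternion extension and check the relations by hand.

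The main obstacle is exhibiting these explicit matrices and verifying the relations, together with pinning down the exact formula for $\mu_K$ and $\lambda_K$ in terms of $x,y$ (signs and framing correction) so that $\lambda_K\equiv x^3 \pmod{\mu_K^2}$ really holds. If the small-group search fails, my fallback is a presentation-level argument. Setting $x^3=y^4=c$ and $\mu_K^2=e$, I would show that $\langle c\rangle$ survives with order $2$ by building a central extension of the quotient $\langle x,y\mid x^3=y^4=e,(y^{-1}x)^2=e\rangle$. That quotient is the triangle group $\Delta(2,3,4)\cong S_4$, and the desired central extension by $C_2$ in which $c\mapsto -1$ corresponds to the binary octahedral-type double cover of $S_4$ in which transpositions lift to involutions, i.e.\ $\GL_2(\mathbb{F}_3)$. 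This strongly suggests $F=\GL_2(\mathbb{F}_3)$ works.
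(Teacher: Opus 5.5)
Your proposal is correct, and it takes a different route from the paper.

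\textbf{The paper's route.} The paper uses the presentation $\langle\alpha_1,\alpha_2\mid\alpha_1^3\alpha_2^4\rangle$ with $\mu_K=\alpha_1^{-1}\alpha_2^{-1}$ and $\lambda_K=(\alpha_2\alpha_1)^{12}\alpha_1^{-3}$. It then lets GAP enumerate the quotient, which has order $48$, with $\mu_K$, $\lambda_K$, $\mu_K\lambda_K$ all of order $2$.

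\textbf{Your route.} You argue structurally. Because the torus framing $pq=12$ is even, $\lambda_K$ agrees with the central fibre class $x^{\pm3}$ modulo $\mu_K^2$. With $x=\alpha_1$, $y=\alpha_2^{-1}$, the paper's formulas read $\mu_K=x^{-1}y$ and $\lambda_K=\mu_K^{-12}x^{-3}$, so your bookkeeping is right. You then detect that class in a finite group.

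\textbf{The step you flagged goes through in $\GL_2(\mathbb{F}_3)$.} Take
$Y=\begin{pmatrix}1&1\\-1&1\end{pmatrix}$, $S=\begin{pmatrix}1&0\\1&-1\end{pmatrix}$ and $X=YS=\begin{pmatrix}-1&-1\\0&-1\end{pmatrix}$. Then:
\begin{itemize}
\item $Y^2=\begin{pmatrix}0&-1\\1&0\end{pmatrix}$, so $Y^4=-I$;
\item $X=-I+N$ with $N^2=0$, so $X^3=-I$ in characteristic $3$;
\item $Y^{-1}X=S$ with $S^2=I$.
\end{itemize}
Thus $x\mapsto X$, $y\mapsto Y$ is well defined on the knot group and kills $\mu_K^2$ and $\lambda_K^2$. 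It sends $\mu_K\mapsto S$, $\lambda_K\mapsto -I$ and $\mu_K\lambda_K\mapsto -S$. That already proves injectivity, so the abelianization step is not even needed.

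\textbf{What each approach buys.} The map above is onto: its image in $PGL_2(\mathbb{F}_3)\cong S_4$ contains elements of orders $3$ and $4$, and the image contains $-I=Y^4$. You also observed that killing $x^3$ leaves the triangle group $\Delta(2,3,4)\cong S_4$. Together these show the quotient group is exactly $\GL_2(\mathbb{F}_3)$, which explains the paper's order $48$. The paper's route buys brevity and a purely mechanical check. Yours gives a hand-verifiable certificate and isolates what is actually used about $T_{3,4}$: even $pq$, and a suitable finite double cover of $\Delta(2,3,4)$.

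\textbf{One slip.} The double cover of $S_4$ in which transpositions lift to involutions is $\GL_2(\mathbb{F}_3)$, not a ``binary octahedral-type'' group. As your own $SU(2)$ remark shows, the binary octahedral group cannot work.
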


\begin{figure}
    \centering
    \includegraphics[width=0.85\linewidth]{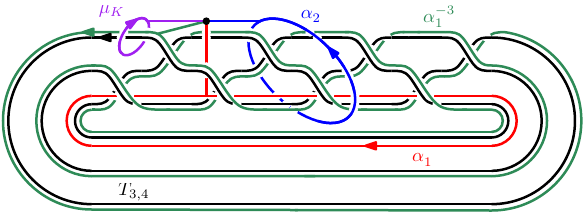}
    \caption{The torus knot $T_{3,4}$ (black), with the meridian $\mu_K$ and curves $\alpha_1$, $\alpha_2$ indicated.
    The green curve is homotopic to $\alpha_1^{-3}$, and is a parallel to the knot $T_{3,4}$ with surface framing $12$ induced by the torus.  We  obtain $\lambda_K$ from $\alpha_1^{-3}$ by adding 12 twists in the direction of $\mu_K^{-1}$.}
    \label{fig:torusknot}
\end{figure}

\begin{proof}
Using the usual description of the torus knot $K=T_{3,4}$ as a subset of the Heegaard surface of the genus $1$ Heegaard splitting of $S^3$, one can show $\pi_{1}(S^{3}\setminus K)\cong\left\langle \alpha_1 ,\alpha_2 \middle|\alpha_1^{3}\alpha_2^{4}\right\rangle $ by Seifert--van Kampen and that $\mu_K=\alpha_1^{-1}\alpha_2^{-1}$ and $\lambda_K=(\alpha_2 \alpha_1 )^{12}\alpha_1^{-3}$ (see Figure~\ref{fig:torusknot}).
Now, this lemma can be checked using the following GAP code \cite{GAP4}.
\begin{verbatim}
F := FreeGroup("a1", "a2");
a1 := F.1; a2 := F.2;
G := F / [a1^3 * a2^4 , (a1^-1 * a2^-1)^2 , ((a2 * a1)^12 * a1^-3)^2];
a1_g := G.1; a2_g := G.2;
mu := a1_g ^-1 * a2_g^-1; lambda := (a2_g * a1_g)^12 * a1_g^-3;
Print("Order(G)=", Order(G), ", Order(mu)=", Order(mu),
", Order(lambda)=", Order(lambda), ", Order(mu*lambda)=", Order(mu * lambda));
\end{verbatim}
Indeed, GAP outputs
\begin{verbatim}
Order(G)=48, Order(lambda)=2, Order(mu)=2, Order(lambda*mu)=2
\end{verbatim}
\vspace{-1.7\baselineskip}
\qedhere
\end{proof}

\begin{lem}
\label{lem:alg-lemma1}Let $H,G_{1},G_{2}$ be groups, let $f_{i}:H\to G_{i}$
be a homomorphism for $i=1,2$, and let $S$ be a subset of $H$.
If $f_{1}(s)\in G_1$ is the identity for all $s\in S$, then the map $\mathrm{Id}_{G_1} :G_1 \to G_1$ and the quotient map $\pi : G_2 \to G_2 / \left\langle \left\langle f_{2}(S)\right\rangle \right\rangle$ induce an isomorphism
\[
\mathrm{Id}_{G_1} \ast \pi : G_{1}\ast_{H}G_{2}\xrightarrow{\cong} G_{1}\ast_{H/\left\langle \left\langle S\right\rangle \right\rangle }G_{2}/\left\langle \left\langle f_{2}(S)\right\rangle \right\rangle .
\]
\end{lem}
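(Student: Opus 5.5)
The plan is to prove the isomorphism by checking that both groups satisfy the same universal property: they corepresent the same functor on groups. Write $\bar f_1 : H/\langle\langle S\rangle\rangle \to G_1$ for the map induced by $f_1$, which is well defined because $f_1(S)=\{e\}$ by hypothesis. Write $\bar f_2 : H/\langle\langle S\rangle\rangle \to G_2/\langle\langle f_2(S)\rangle\rangle$ for the map induced by $\pi\circ f_2$, which is well defined because $\pi\circ f_2$ kills $S$. The right-hand pushout is taken with respect to these two maps.

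First, I check that $\mathrm{Id}_{G_1}\ast\pi$ is well defined. The compositions $H\to G_1 \to G_1\ast_{H/\langle\langle S\rangle\rangle}G_2/\langle\langle f_2(S)\rangle\rangle$ and $H\to G_2 \to G_2/\langle\langle f_2(S)\rangle\rangle \to$ (same target) must agree. Both factor through $H\to H/\langle\langle S\rangle\rangle$, via $\bar f_1$ and $\bar f_2$ respectively, and $\bar f_1$ and $\bar f_2$ agree in the target pushout by its definition. The universal property of $G_1\ast_H G_2$ then gives the homomorphism.

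Second, I construct an inverse, again by universal properties. Let $\iota_1 : G_1 \to G_1\ast_H G_2$ and $\iota_2 : G_2 \to G_1\ast_H G_2$ be the canonical maps. For $s\in S$ we have $\iota_2(f_2(s)) = \iota_1(f_1(s)) = e$, so $\iota_2$ kills $f_2(S)$, hence kills its normal closure, and descends to $\bar\iota_2 : G_2/\langle\langle f_2(S)\rangle\rangle \to G_1\ast_H G_2$. To apply the universal property of the right-hand pushout, I need $\iota_1\circ\bar f_1 = \bar\iota_2\circ\bar f_2$ on $H/\langle\langle S\rangle\rangle$. Since $H\to H/\langle\langle S\rangle\rangle$ is surjective, it suffices to check this on $H$, where it reads $\iota_1 f_1 = \iota_2 f_2$, which is the defining relation of $G_1\ast_H G_2$. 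This produces a homomorphism $\Psi$ in the reverse direction.

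Finally, I verify that the two compositions are the identity. It is enough to check this on the images of the generating factors $G_1$ and $G_2$ (respectively $G_2/\langle\langle f_2(S)\rangle\rangle$), where both compositions are visibly the identity by construction. Alternatively, one can appeal to uniqueness in the universal properties. There is no real obstacle here; the only point needing care is the middle step, namely that $\iota_2$ kills $f_2(S)$. This is exactly where the hypothesis $f_1(S)=e$ is used, transported through the amalgamation relation $\iota_1 f_1=\iota_2 f_2$.
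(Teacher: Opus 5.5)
Your proposal is correct and follows essentially the same route as the paper: both use the amalgamation relation together with $f_1(S)=\{e\}$ to see that the pushout map from $G_2$ kills $f_2(S)$, descend it to $G_2/\left\langle \left\langle f_2(S)\right\rangle \right\rangle$, and combine it with the map from $G_1$ to obtain the inverse of $\mathrm{Id}_{G_1}\ast\pi$. You additionally spell out the well-definedness of the forward map, the compatibility check on $H/\left\langle \left\langle S\right\rangle \right\rangle$ via surjectivity of $H\to H/\left\langle \left\langle S\right\rangle \right\rangle$, and the verification on generators, all of which the paper leaves implicit.
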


\begin{proof}
Let $j_i : G_{i} \to G_1 \ast_H G_2$ be the pushout maps.
Since $j_1 \circ f_1 =j_2 \circ f_2$ and $f_1 (s)$ is the identity for all $s\in S$, we have that $j_2 (f_2 (s))$ is the identity for all $s\in S$.
Hence, $j_2$ factors through $G_2 /  \left\langle \left\langle f_2 (S)\right\rangle \right\rangle$; denote the induced map as $j_2':G_2 /  \left\langle \left\langle f_2 (S)\right\rangle \right\rangle \to G_1 \ast _H G_2$.
The maps $j_1$ and $j_2'$ induce a map $ G_{1}\ast_{H/\left\langle \left\langle S\right\rangle \right\rangle }G_{2}/\left\langle \left\langle f_{2}(S)\right\rangle \right\rangle \to G_{1}\ast_{H}G_{2} $ which is the inverse of $\mathrm{Id}_{G_1} \ast \pi$.
\end{proof}

\begin{lem}[{\cite[Remark after Theorem~1, Section~I.2]{MR607504}}]
\label{lem:alg-thm2}Let $H,G_{1},G_{2}$ be groups, and let $f_{i}:H\to G_{i}$
be injective homomorphisms for both $i=1,2$. Then the
pushout maps $G_{i}\to G_{1}\ast_{H}G_{2}$ are injective.
\end{lem}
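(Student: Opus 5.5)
We would prove this by the standard normal form (van der Waerden) argument. The idea is to build a set $\Omega$ on which both $G_1$ and $G_2$ act, with the two actions agreeing on $H$. The universal property of the pushout then gives a homomorphism $G_1\ast_H G_2\to \mathrm{Sym}(\Omega)$. Finally we check that each composite $G_i\to \mathrm{Sym}(\Omega)$ is injective, which forces the pushout maps $G_i\to G_1\ast_H G_2$ to be injective.

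\textbf{Construction of $\Omega$.} Since $f_i$ is injective, we identify $H$ with the subgroup $f_i(H)\le G_i$. For each $i$, choose a set $T_i$ of representatives of the right cosets $f_i(H)g$ in $G_i$, with $1\in T_i$. Every $g\in G_i$ then factors uniquely as $g=f_i(h)t$ with $h\in H$ and $t\in T_i$; this uniqueness is where injectivity of $f_i$ is used. Let $\Omega$ be the set of tuples $(h;t_1,\dots,t_n)$ with $n\ge 0$, $h\in H$, and each $t_k\in T_{i_k}\setminus\{1\}$, where consecutive indices alternate ($i_k\neq i_{k+1}$).

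\textbf{The action of $G_i$.} We define it through a bijection. Let $\Omega_i\subset\Omega$ be the words that are empty or whose first letter does not lie in $T_i$. Define
\[
\Phi_i: G_i\times \Omega_i\to \Omega
\]
as follows. For $g\in G_i$ and $(h;t_1,\dots,t_n)\in\Omega_i$, write $g f_i(h)=f_i(h')t$ with $t\in T_i$. If $t=1$, set the image to be $(h';t_1,\dots,t_n)$; otherwise set it to be $(h';t,t_1,\dots,t_n)$. Uniqueness of the coset decomposition shows that $\Phi_i$ is a bijection. Now let $G_i$ act on $\Omega$ by transporting left multiplication on the $G_i$ factor through $\Phi_i$. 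This is automatically a group action, and that observation disposes of what would otherwise be the main obstacle: checking associativity by cases on whether letters cancel.

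\textbf{Agreement on $H$ and conclusion.} For $h_0\in H$, the element $f_1(h_0)$ acts on $\Omega$ by $(h;t_1,\dots)\mapsto(h_0h;t_1,\dots)$. One checks this in two cases, according to whether the first letter lies in $T_1$ or not, using that $1\in T_1$. The same formula holds for $f_2(h_0)$, so the actions satisfy $f_1(h_0)\cdot\omega=f_2(h_0)\cdot\omega$ for all $\omega\in\Omega$. The pushout property therefore yields a homomorphism $G_1\ast_H G_2\to\mathrm{Sym}(\Omega)$ compatible with both actions.

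It remains to show faithfulness. If $g\in G_i$ acts trivially, apply it to the empty word $(1;)$. Writing $g=f_i(h)t$, the image is $(h;t)$ or $(h;)$, and this equals $(1;)$ only if $t=1$ and $h=1$, that is, $g=1$. Hence each $G_i\to\mathrm{Sym}(\Omega)$ is injective, and since it factors through the pushout map $G_i\to G_1\ast_H G_2$, that map is injective too.
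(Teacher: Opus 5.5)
Your strategy is the right one: it is the van der Waerden action-on-reduced-words argument, which is how the reference the paper cites (Serre's \emph{Trees}) proves the normal form theorem and this injectivity. The paper itself gives no proof beyond that citation. However, the step you single out as removing the main obstacle is false as stated.

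The map $\Phi_i\colon G_i\times\Omega_i\to\Omega$ is not a bijection when $H\neq 1$, because your $\Omega_i$ still carries the $H$-coordinate. Indeed, $\Phi_i(g,(h;w))$ depends only on the pair $(gf_i(h),w)$, so $(g,(h;w))$ and $(gf_i(k)^{-1},(kh;w))$ have the same image for every $k\in H$. For example, take $H=G_1=G_2=C_2$ and $f_i=\mathrm{Id}$. Then $T_i=\{1\}$, so $\Omega=\Omega_i$ has two elements while $G_i\times\Omega_i$ has four. Both your definition of the $G_i$-action and your claim that it is ``automatically'' an action rest on this bijection, so the argument does not go through as written.

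The repair is small. Let $\Omega_i$ consist only of the words $(1;t_1,\dots,t_n)$ with $n=0$ or $t_1\notin T_i$; this is the set Serre uses. For $g=f_i(h)t$, send $(g,(1;w))$ to $(h;t,w)$, or to $(h;w)$ if $t=1$. This is a bijection $G_i\times\Omega_i\to\Omega$, with inverse as follows:
\begin{itemize}
\item if $n\geq 1$ and $t_1\in T_i$, send $(h;t_1,t_2,\dots)$ to $(f_i(h)t_1,(1;t_2,\dots))$, where alternation guarantees $(1;t_2,\dots)\in\Omega_i$;
\item otherwise, send it to $(f_i(h),(1;t_1,\dots))$.
\end{itemize}
Transporting left multiplication through this bijection now gives a genuine action. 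Equivalently, you may keep your original $\Phi_i$ and note that it factors through the $G_i$-equivariant surjection $(g,(h;w))\mapsto(gf_i(h),(1;w))$, so left multiplication descends to $\Omega$.

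With this correction, the rest of your argument is correct: the computation that $f_1(h_0)$ and $f_2(h_0)$ both act by $(h;w)\mapsto(h_0h;w)$, the appeal to the universal property, and the faithfulness check on $(1;)$. Formally, you should also tag each letter with the factor it comes from, i.e.\ work in $T_1\sqcup T_2$, so that ``the first letter lies in $T_i$'' is unambiguous.
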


Now we are ready to prove Theorem~\ref{thm:main-pi1-comp}.
\begin{proof}[Proof of Theorem~\ref{thm:main-pi1-comp}]
For ease of exposition, we divide the proof into four steps.

\vspace{3mm}
\noindent\textbf{Step 1} (Reduce to showing that  $C_2\{u\}\times C_2\{v\}\to\pi_{1}(W,\overline{x})/\left\langle \left\langle [0,1]\times\{x\}\right\rangle \right\rangle$ is injective)\textbf{.}
By Lemma~\ref{lem:pi1-maps-pq},
the map $\pi_{1}(\partial N(Q),\overline{x})\to \pi_{1}(S^{4}\setminus(P\sqcup \mathring{N}(Q)),\overline{x})$
is the quotient map 
\[
\pi_{1}(\partial N(Q),\overline{x})
\to \pi_{1}(\partial N(Q),\overline{x})/\left\langle \left\langle [0,1]\times \{x\} \right\rangle\right\rangle.
\]
Hence by Seifert--van Kampen we have
\begin{align*}
\pi_{1}\left(\left(S^{4}\setminus(P\sqcup\mathring{N}(Q))\right)\cup W,\overline{x}\right) &
\cong \pi_{1}(S^{4}\setminus(P\sqcup \mathring{N}(Q)),\overline{x})\ast_{\pi_{1}(\partial N(Q),\overline{x})} \pi_{1}(W,\overline{x})\\
& \cong  \pi_{1}(W,\overline{x})/\left\langle \left\langle [0,1]\times\{x\}\right\rangle \right\rangle .
\end{align*}
Thus Theorem~\ref{thm:main-pi1-comp} is equivalent to the map
\begin{equation}\label{eq:want-inj}
    C_2\{u\}\times C_2\{v\}\to\pi_{1}(W,\overline{x})/\left\langle \left\langle [0,1]\times\{x\}\right\rangle \right\rangle
\end{equation}
being injective. Note that since $u,v \subset \partial N(Q) = \partial_\mathrm{ext} X \subset X$, the map (\ref{eq:want-inj}) factors through
\begin{equation}\label{eq:map-to-x}
C_2\{u\}\times C_2\{v\}\to \pi_{1}(X,\overline{x})/\left\langle \left\langle [0,1]\times\{x\}\right\rangle \right\rangle.
\end{equation}

\vspace{3mm}
\noindent\textbf{Step 2} (Express $\pi_1 (W,\overline{x})/\left\langle \left\langle [0,1]\times\{x\}\right\rangle \right\rangle$ using Seifert--van Kampen)\textbf{.}
In this step we use Seifert--van Kampen 
to construct the following isomorphism (we explain notation below).
\begin{multline}\label{eq:svk-W}
\pi_1 (W,\overline{x})/\left\langle \left\langle [0,1]\times\{x\}\right\rangle \right\rangle \\
\cong (\pi_{1}(X,\overline{x}) /\left\langle \left\langle [0,1]\times\{x\}\right\rangle \right\rangle)\ast_{ C_{\infty}\{f\}\times C_{\infty}\{a\}\times C_{\infty}\{v\}}\pi_{1}(S_{g}^{1}\times(S^{3}\setminus\mathring{N}(K)),(0,0,0)).
\end{multline}

Let $W' := W\cup e^2$, $X' := X\cup e^2 \subset W'$ where $e^2$ is a $2$-cell glued along $[0,1]\times \{x\}$.
We identify
\[\pi_1 (W,\overline{x})/\left\langle \left\langle [0,1]\times\{x\}\right\rangle \right\rangle \cong \pi_1 (W',\overline{x}),
\ \pi_1 (X,\overline{x})/\left\langle \left\langle [0,1]\times\{x\}\right\rangle \right\rangle \cong \pi_1 (X',\overline{x}).
\]
The isomorphism (\ref{eq:svk-W}) is given by Seifert--van Kampen on the two subsets
\begin{equation}\label{eq:two-subsets}
X'\quad \mathrm{and} \quad (S_{g}^{1}\times(S^{3}\setminus\mathring{N}(K)))\cup(\{0\}\times \rho_{1}\times\{0\}) \subset W'.
\end{equation}
Here, we added $\{0\}\times \rho_{1}\times\{0\}\subset\{0\}\times A\times\{0\}\subset X$
so that the second subset contains the basepoint $\overline{x}$.
(Recall $X$ is the mapping
torus of $r\beta\times r:A\times S_{v}^{1}\to A\times S_{v}^{1}$
and $\overline{x}=(0,y,0)$ and $\rho_1$ is the arc in $A$ pictured in Figure \ref{fig:a-disk}.)

The intersection of the two subsets (\ref{eq:two-subsets}) is $\partialint X \cup (\{0\}\times \rho_{1}\times\{0\})$.
In Subsection~\ref{subsec:fill-int} we made the identification $\partialint X = S_{f}^{1}\times S_{a}^{1}\times S_{v}^{1}$.
Hence,
\[
\pi_1 (\partialint X \cup (\{0\}\times \rho_{1}\times\{0\}),\overline{x}) = C_{\infty}\{f\}\times C_{\infty}\{a\}\times C_{\infty}\{v\}
\]
where $f,a,v$ are loops in $\partialint X \cup (\{0\}\times \rho_{1}\times\{0\})$ based at $\overline{x}$ (and so also are loops in $X$), defined as follows.
(Note that the two endpoints of $\{0\}\times \rho_{1}\times\{0\}$ are $\overline{x}=(0,y,0)$ and $(0,y_1,0) \in \partialint X$,
and the identification
$\partialint X = S_{f}^{1}\times S_{a}^{1}\times S_{v}^{1}$ identifies $(0,y_1,0) \in \partialint X$ with $(0,0,0)\in S_{f}^{1}\times S_{a}^{1}\times S_{v}^{1}$.)
\begin{eqenumerate}
    \eqitem\label{enu:f}The loop $f$ is the concatenation of $\{0\}\times \rho_{1}\times\{0\}$, $S_{f}^{1} \times \{0\} \times \{0\}$, and $\{0\}\times \rho_{1}^{-1}\times\{0\}$. By (\ref{enu:intf}), $f$ is homotopic in $X$ to the loop given by concatenating $[0, 1]\times\{y_1\}\times\{0\},
[0, 1] \times \{y_4\} \times \{0\}, [0, 1] \times\{y_3\}\times \{0\}$, and $[0, 1]\times \{y_2\} \times \{0\}.$
     \eqitem\label{enu:a}The loop $a$ is the concatenation of $\{0\}\times \rho_{1}\times\{0\}$, $\{0\}\times S_{a}^{1} \times \{0\} $, and $\{0\}\times \rho_{1}^{-1}\times\{0\}$.
     By (\ref{enu:inta}), 
     the loop $\{0\}\times S_{a}^{1} \times \{0\} \subset S_{f}^{1}\times S_{a}^{1}\times S_{v}^{1}$ is identified with $\{0\}\times \partial D_1^2 \times \{0\}\subset \partialint X$, and so $a$ is homotopic in $X$ to the loop $\{0\}\times \gamma_1 \times \{0\}$.
     \eqitem\label{enu:v}The loop $v$ is the concatenation of $\{0\}\times \rho_{1}\times\{0\}$, $\{0\} \times \{0\} \times S_v^1$, and $\{0\}\times \rho_{1}^{-1}\times\{0\}$.
     By~(\ref{enu:intv}),
     the loop $\{0\} \times \{0\} \times S_v^1 \subset S_{f}^{1}\times S_{a}^{1}\times S_{v}^{1} $ is identified with $\{0\}\times \{y_1 \}  \times S_v^1 \subset  \partialint X$.
     Hence $v$ is homotopic in $X$ to the loop $\{0\}\times \{y\}  \times S_v^1 $
     and thus to the loop $v \subset \partialext X = \partial N(Q)$ from before (recall Equation~(\ref{eq:extgluing})),
     i.e.\ the $v$ of Equations~(\ref{eq:want-inj})~and~(\ref{eq:map-to-x}).
\end{eqenumerate}

In Subsection~\ref{subsec:fill-int} we made the identification $\partial (S_{g}^{1}\times(S^{3}\setminus\mathring{N}(K))) = S_{g}^{1}\times S_{\lambda_{K}}^{1}\times S_{\mu_{K}}^{1}$.
We use the basepoint $(0,0,0)\in S_{g}^{1}\times S_{\lambda_{K}}^{1}\times S_{\mu_{K}}^{1}$
for $\pi_1(S_{g}^{1}\times(S^{3}\setminus\mathring{N}(K)))$ and identify
\begin{equation}\label{eq:pi1-retraction}
\pi_1((S_{g}^{1}\times(S^{3}\setminus\mathring{N}(K)))\cup(\{0\}\times \rho_{1}\times\{0\}), \overline{x}) \cong \pi_1 (S_{g}^{1}\times(S^{3}\setminus\mathring{N}(K)),(0,0,0))
\end{equation}
using the retraction that contracts $\{0\}\times \rho_{1}\times\{0\}$ to the basepoint $(0,0,0)$ (which is identified with $(0,y_1,0)\in \partialint X$).
Define loops $g$, $\lambda_K$, and $\mu_K$ in $S_{g}^{1}\times S_{\lambda_{K}}^{1}\times S_{\mu_{K}}^{1}$ based at $(0,0,0)$
as $S_{g}^1 \times\{0\}\times\{0\}$, $\{0\}\times S_{\lambda_K} ^1 \times \{0\}$, and $\{0\}\times \{0\}\times S_{\mu_K}^1$ (respectively).

Finally, let us explicitly describe the two maps
\begin{gather}
   \label{eq:pushout-first}C_{\infty}\{f\}\times C_{\infty}\{a\}\times C_{\infty}\{v\} \to \pi_{1}(X,\overline{x})/\left\langle \left\langle [0,1]\times\{x\}\right\rangle \right\rangle\\
    \label{eq:pushout-second}C_{\infty}\{f\}\times C_{\infty}\{a\}\times C_{\infty}\{v\} \to \pi_{1}(S_{g}^{1}\times(S^{3}\setminus\mathring{N}(K)), (0,0,0))
\end{gather}
for the pushout of (\ref{eq:svk-W}).
By definition, the generators $f,a,v\in C_{\infty}\{f\}\times C_{\infty}\{a\}\times C_{\infty}\{v\}$
map to the loops $f,a,v$ (respectively) under the map (\ref{eq:pushout-first}).
By Equation~(\ref{eq:identify-t3}), the identification (\ref{eq:pi1-retraction}) maps
the loops $f,a,v$ to $g,\lambda_K , \mu_K$, respectively, and so the map (\ref{eq:pushout-second}) is given by
\begin{equation}\label{eq:svk-map}
f\mapsto g,\ a \mapsto \lambda_K,\ v \mapsto \mu_K .
\end{equation}

\vspace{3mm}
\noindent\textbf{Step 3} (Prove that (\ref{eq:want-inj}) is injective assuming a proposed map $\psi$ exists)\textbf{.}
In Step 4, we will define a map 
\[\psi:\pi_{1}(X,\overline{x})/\left\langle \left\langle [0,1]\times\{x\}\right\rangle \right\rangle \to S_{4}\times C_2\{t\}\]
such that 
\begin{equation}\label{eq:psi-conds}
\psi (f)=(3\ 4),\ \psi(a)=(1\ 2),\ \psi(u)=(1\ 2)(3\ 4),\ \psi (v) = t.
\end{equation}
Here, $S_4$ denotes the symmetric group on four elements.

In this step we show that the map (\ref{eq:want-inj}) is injective assuming that such a map $\psi$ exists.
Let us use the basepoint $(0,0,0)$ for $S_g^1 \times (S^3 \setminus \mathring{N}(K))$ (as in Step~2), and $(0,0)$ for $S^3 \setminus \mathring{N}(K)$.
We suppress these basepoints from the notation.
Consider the following sequence of maps:
\begin{align}
 & \nonumber
 \pi_{1}(W,\overline{x})/\left\langle \left\langle [0,1]\times\{x\}\right\rangle \right\rangle \\
 & \label{eq:4} \cong\left(\pi_{1}(X,\overline{x})/\left\langle \left\langle [0,1]\times\{x\}\right\rangle \right\rangle \right)\ast_{ C_{\infty}\{f\}\times C_{\infty}\{a\}\times C_{\infty}\{v\}}\pi_{1}(S_{g}^{1}\times(S^{3}\setminus\mathring{N}(K)))\\
 & \label{eq:5} \xrightarrow{\psi \ast \mathrm{Id}} (S_{4}\times C_2\{t\})\ast_{ C_{\infty}\{f\}\times C_{\infty}\{a\}\times C_{\infty}\{v\}}\pi_{1}(S_{g}^{1}\times(S^{3}\setminus\mathring{N}(K)))\\
 & \label{eq:6} \cong(S_{4}\times C_2\{t\})\ast_{ C_2\{f\}\times C_2\{a\}\times C_2\{v\}}\pi_{1}(S_{g}^{1}\times(S^{3}\setminus\mathring{N}(K)))/\left\langle \left\langle g^{2},\lambda_{K}^{2},\mu_{K}^{2}\right\rangle \right\rangle\\
 & \cong(S_{4}\times C_2\{t\})\ast_{ C_2\{f\}\times C_2\{a\}\times C_2\{v\}}\left( C_2\{g\}\times \left(\pi_{1}(S^{3}\setminus\mathring{N}(K))/\left\langle \left\langle \lambda_{K}^{2},\mu_{K}^{2}\right\rangle \right\rangle \right) \right).\label{eq:final-group}
\end{align}
The map (\ref{eq:4}) is the isomorphism (\ref{eq:svk-W}) from Step 2.
The map (\ref{eq:5}) is induced by $\psi$; note that this is surjective, but we do not make use of this fact.
The map (\ref{eq:6}) is given by Lemma~\ref{lem:alg-lemma1}:
indeed, the images $\psi (f^2) ,\psi(a^2) ,\psi(v^2) \in S_4 \times  C_2\{t\}$ are trivial by Equation~(\ref{eq:psi-conds}).
Finally, the map (\ref{eq:final-group}) is given by the isomorphism
\[
\pi_{1}(S_{g}^{1}\times(S^{3}\setminus\mathring{N}(K)))/\left\langle \left\langle g^{2},\lambda_{K}^{2},\mu_{K}^{2}\right\rangle \right\rangle \cong  C_2\{g\}\times \left(\pi_{1}(S^{3}\setminus\mathring{N}(K))/\left\langle \left\langle \lambda_{K}^{2},\mu_{K}^{2}\right\rangle \right\rangle \right).
\]

Let $G$ be the target group in (\ref{eq:final-group}), and let $j:S_4 \times C_2 \{t\} \to G$ be the pushout map.
We use that the following diagram~(\ref{eq:com-diag}) commutes to show that the map (\ref{eq:want-inj}) is injective.
\begin{equation}\label{eq:com-diag}
\begin{tikzcd}[ampersand replacement=\&]
	\& {\pi_{1}(X,\overline{x})/\left\langle \left\langle [0,1]\times\{x\}\right\rangle \right\rangle} \& {\pi_{1}(W,\overline{x})/\left\langle \left\langle [0,1]\times\{x\}\right\rangle \right\rangle} \\
	{ C_2\{u\}\times C_2\{v\}} \& {S_{4}\times C_2\{t\}} \& G
	\arrow[from=1-2, to=1-3]
	\arrow["\psi"{description}, from=1-2, to=2-2]
	\arrow["{\mathrm{(\ref{eq:4})-(\ref{eq:final-group})}}"{description}, from=1-3, to=2-3]
	\arrow["{\mathrm{(\ref{eq:map-to-x})}}"{description}, from=2-1, to=1-2]
	\arrow["{(\ref{eq:want-inj})}"{description, pos=0.7}, dashed, from=2-1, to=1-3]
	\arrow["{\psi \circ \mathrm{(\ref{eq:map-to-x})}}"{description}, from=2-1, to=2-2]
	\arrow["j"{description}, from=2-2, to=2-3]
\end{tikzcd}
\end{equation}
We claim that $\psi \circ \mathrm{(\ref{eq:map-to-x})}$ and $j$ are injective.
Indeed, $\psi \circ \mathrm{(\ref{eq:map-to-x})}$ is $u\mapsto (1\ 2)(3\ 4)$, $v\mapsto t$ by Equation~(\ref{eq:psi-conds}) and is thus injective.
Let us use Lemma~\ref{lem:alg-thm2} to show that $j$ is injective.
First,
\[
C_2\{f\}\times C_2\{a\}\times C_2\{v\}\to S_{4}\times C_2\{t\}
\]
maps $f\mapsto(3\ 4)$, $a\mapsto(1\ 2)$, and $v\mapsto t$ by Equation~(\ref{eq:psi-conds}),
and so it is injective.
Second,
\begin{equation}\label{eq:use-t34}
C_2\{f\}\times C_2\{a\}\times C_2\{v\}\to C_2\{g\}\times\pi_{1}(S^{3}\setminus\mathring{N}(K))/\left\langle \left\langle \lambda_{K}^{2},\mu_{K}^{2}\right\rangle \right\rangle
\end{equation}
maps $f\mapsto g$, $a\mapsto \lambda_K$, $v\mapsto \mu_K$ by (\ref{eq:svk-map}),
and so it is injective by Lemma~\ref{lem:t34-injective}.
Hence $j$ is injective by Lemma~\ref{lem:alg-thm2}.
Thus we have shown the bottom row of the diagram in~(\ref{eq:com-diag}) is injective, so we find from the diagram that the map (\ref{eq:want-inj}) is injective.

\vspace{3mm}
\noindent\textbf{Step 4} (Construct $\psi$)\textbf{.}
In this step we construct a map $\psi$ that satisfies Equation~(\ref{eq:psi-conds}), completing the proof of Theorem~\ref{thm:main-pi1-comp}.
Recall that $X$ is the mapping
torus of $r\beta\times r:A\times S_{v}^{1}\to A\times S_{v}^{1}$
and that $\overline{x}=(0,y,0)$.
The fundamental group $\pi_1 (A,y)$ is the free group $F_4$ on four letters $\gamma_{1},\gamma_{2},\gamma_{3},\gamma_{4}$.
Let $\beta,r:F_4 \to F_4$ denote the maps on $F_4 = \pi_1 (A,y)$
induced by the diffeomorphisms $\beta,r:(A,y)\to (A,y)$.
Then
\begin{equation}\label{eq:pi1X-quotient}
\pi_{1}(X,\overline{x})/\left\langle \left\langle [0,1]\times\{x\}\right\rangle \right\rangle \cong\left(F_{4}/\left\langle \left\langle \gamma_{i}^{-1}(r\beta(\gamma_{i}))\ \forall i\right\rangle \right\rangle \right)\times C_2\{v\}.
\end{equation}
Explicitly, this isomorphism (\ref{eq:pi1X-quotient}) maps
\begin{eqenumerate}
    \eqitem\label{enu:gammai}the loop $\{0\}\times \gamma_i \times \{0\}$ to $\gamma_i \in F_{4}/\left\langle \left\langle \gamma_{i}^{-1}(r\beta(\gamma_{i}))\ \forall i\right\rangle \right\rangle$, and
    \eqitem\label{enu:v-iso}the loop $v=\{0\}\times \{y\} \times S_v^1$ to the generator $v\in C_2\{v\}$.
\end{eqenumerate}

Let $F:=\ast_{i=1}^{4} C_2\{\gamma_{i}\}=F_4 / \left\langle \left\langle \gamma_i ^2 \ \forall i\right\rangle \right\rangle$
and denote the composition of $\beta:F_4\to F_4$ (resp.\ $r:F_4\to F_4$) with the quotient $F_4\to F$ also as $\beta:F_4 \to F$ (resp.\ $r:F_4 \to F$).
Let
\begin{equation*}
\Phi: \pi_{1}(X,\overline{x})/\left\langle \left\langle [0,1]\times\{x\}\right\rangle \right\rangle \to\left(F/\left\langle \left\langle \gamma_{i}(r\beta(\gamma_{i}))\ \forall i\right\rangle \right\rangle \right)\times C_2\{v\}
\end{equation*}
be the composition of the map (\ref{eq:pi1X-quotient}) with the quotient map given by quotienting by $\left\langle \left\langle \gamma_{i}^{2} \ \forall i\right\rangle \right\rangle $.
Then, by (\ref{enu:gammai}) and (\ref{enu:v-iso}) we have
\begin{equation}\label{eq:image-Phi}
\Phi(\{0\}\times \gamma_i \times \{0\}) = \gamma_i,\ \Phi(v) = v.
\end{equation}

Now, let us define a map
\begin{equation}\label{eq:psibar}
\overline{\psi}: F/\left\langle \left\langle \gamma_{i}(r\beta(\gamma_{i}))\ \forall i\right\rangle \right\rangle \to S_4.
\end{equation}
First, define $\overline{\psi}:F\to S_{4}$ by letting
\begin{equation}\label{eq:psibar-gamma}
\overline{\psi}(\gamma_{1})=(1\ 2),\ 
\overline{\psi}(\gamma_{2})=(2\ 3),\ 
\overline{\psi}(\gamma_{3})=(2\ 3),\ 
\overline{\psi}(\gamma_{4})=(3\ 4).
\end{equation}
We claim that this map descends to (\ref{eq:psibar}),
i.e.\ that $\overline{\psi}\left(\gamma_{i}^ {}(r\beta(\gamma_{i}))\right)=\mathrm{Id}$
for all $i$.
Let us compute $r\beta(\gamma_{i})$;
recall the action of $\beta$ from Convention \ref{enu:action-gamma} and that $r\gamma_{i}=\gamma_{5-i}$ in $F$.
We have
\begin{align*}
r\beta(\gamma_{1}) & =\gamma_{4}\gamma_{3}\gamma_{4}\gamma_{2}\gamma_{1}\gamma_{2}\gamma_{4}\gamma_{2}\gamma_{1}\gamma_{2}\gamma_{4}\gamma_{3}\gamma_{4}, &
r\beta(\gamma_{2}) & =\gamma_{4}\gamma_{3}\gamma_{4}\gamma_{2}\gamma_{1}\gamma_{2}\gamma_{4}\gamma_{2}\gamma_{1}\gamma_{2}\gamma_{4}\gamma_{2}\gamma_{1}\gamma_{2}\gamma_{4}\gamma_{3}\gamma_{4},\\
r\beta(\gamma_{3}) & =\gamma_{2}, &
r\beta(\gamma_{4}) & =\gamma_{2}\gamma_{4}\gamma_{3}\gamma_{4}\gamma_{2}.
\end{align*}
Now by computing $\overline{\psi}(\gamma_i (r\beta(\gamma_{i})))$ using Equation~(\ref{eq:psibar-gamma}),
we can check that it is the identity for all $i$.
Hence $\overline{\psi}$ descends to~(\ref{eq:psibar}).

Define $\psi:\pi_{1}(X,\overline{x})/\left\langle \left\langle [0,1]\times\{x\}\right\rangle \right\rangle \to S_{4}\times C_2\{t\}$
as the composition of the map $\Phi$ with 
\begin{equation}\label{eq:map-compose}
\left(F/\left\langle \left\langle \gamma_{i}(r\beta(\gamma_{i}))\ \forall i\right\rangle \right\rangle \right)\times C_2\{v\}\to S_{4}\times C_2\{t\}:\gamma_i v^{n}\mapsto\overline{\psi}(\gamma_i )t^{n}.
\end{equation}
We are left to check Equation~(\ref{eq:psi-conds}).
For this, we first study the images of $v,u,a,f$ under $\Phi$ using Equation~(\ref{eq:image-Phi}) and then evaluate the images under $\psi$ using Equation~(\ref{eq:psibar-gamma}).
\begin{itemize}[leftmargin=*]
\item Since $\Phi(v)=v$ and the map (\ref{eq:map-compose}) sends $v$ to $t$, we have $\psi(v)=t$.
\item By Equation~(\ref{eq:extgluing}) we have $u=\{0\}\times \partial D^2 \times \{0\} \subset X$.
Hence $u$ is homotopic to the concatenation of 
$\{0\}\times \gamma_1 \times \{0\}, \cdots , \{0\}\times \gamma_4 \times \{0\}$,
and so 
$\Phi(u)=\gamma_{1}\gamma_{2}\gamma_{3}\gamma_{4}$.
Thus $\psi(u)=(1\ 2)(3\ 4)$.
\item By (\ref{enu:a}) we have $\Phi(a)=\gamma_{1}$, and so $\psi(a)=(1\ 2)$.
\item Let $\delta_{i}$ be the loop in $X$ based at $\overline{x}$ given by the concatenation of $\{0\}\times \rho_{i}\times\{0\}$,
$[0,1]\times\{y_{i}\}\times\{0\}$, and $\{0\}\times \rho_{r\beta(i)}^{-1}\times\{0\}$
(recall from Remark~\ref{rem:transitive} that $r\beta : \{ 1,2,3,4\} \to \{1,2,3,4\}$ is the cycle $(1\ 4\ 3\ 2)$).
Then by (\ref{enu:intf}) and (\ref{enu:f}), $f$ is homotopic in $X$ to $\delta_{1}\delta_{4}\delta_{3}\delta_{2}$.

Let us compute $\Phi(\delta_i)$.
As a loop in $X$ based at $\overline{x}$,
$\delta_{i}$ is homotopic to the concatenation of $[0,1]\times\{x\}=[0,1]\times\{y\}\times\{0\}$,
$\{0\}\times r\beta(\rho_{i})\times\{0\}$, and $\{0\}\times \rho_{r\beta(i)}^{-1}\times\{0\}$.
Hence in $\pi_{1}(X,\overline{x})/\left\langle \left\langle [0,1]\times\{x\}\right\rangle \right\rangle $ (which is the domain of $\Phi$),
$\delta_i$ is equal to the concatenation of
$\{0\}\times r\beta(\rho_{i})\times\{0\}$ and $\{0\}\times \rho_{r\beta(i)}^{-1}\times\{0\}$.
Recall Convention \ref{enu:action-rho}; after taking the quotient  $\gamma_i ^{-1} = \gamma_i$ (to simplify the following expressions), we have

\[
\beta(\rho_{1})=\gamma_{1}\gamma_{2}\gamma_{1}\gamma_{3}\gamma_{4}\gamma_{3}\rho_{1},\ 
\beta(\rho_{2})=\gamma_{1}\gamma_{2}\gamma_{1}\gamma_{3}\gamma_{4}\gamma_{3}\gamma_{1}\gamma_{3}\gamma_{4}\rho_{4},\ 
\beta(\rho_{3})=\rho_{3},\ 
\beta(\rho_{4})=\gamma_{3}\gamma_{1}\rho_{2},
\]
and so we have
\[
\Phi(\delta_{1})=\gamma_{4}\gamma_{3}\gamma_{4}\gamma_{2}\gamma_{1}\gamma_{2},\ 
\Phi(\delta_{2})=\gamma_{4}\gamma_{3}\gamma_{4}\gamma_{2}\gamma_{1}\gamma_{2}\gamma_{4}\gamma_{2}\gamma_{1},\ 
\Phi(\delta_{3})=e,\ 
\Phi(\delta_{4})=\gamma_{2}\gamma_{4}.
\]

Finally, since $f=\delta_1 \delta_4 \delta_3 \delta_2$, we have 
\[
\Phi(f)=\Phi(\delta _1) \Phi (\delta _4 )\Phi (\delta _3 )\Phi (\delta_2 ) 
=\gamma_{4}\gamma_{3}\gamma_{4}\gamma_{2}\gamma_{1}\gamma_{3}\gamma_{4}\gamma_{2}\gamma_{1}\gamma_{2}\gamma_{4}\gamma_{2}\gamma_{1},
\]
and now we can check that $\psi(f)=(3\ 4).$
\end{itemize}
This completes the proof of Theorem~\ref{thm:main-pi1-comp}.
\end{proof}

\subsection{\label{subsec:irreducible-klein}An irreducible Klein bottle in $S^4$ with normal Euler number $\pm 4$}
In this subsection we show that the connected sum $R\# R$ of two copies of our irreducible projective plane $R$ is irreducible, and hence prove Theorem~\ref{thm:klein}. To show that $R \# R$ is irreducible, we use Lemma~\ref{lem:reducible-klein}.

\begin{lem}\label{lem:reducible-klein}
If an embedded Klein bottle $\Sigma \subset S^4$ is the connected sum of a projective plane and an unknotted projective plane,
then the peripheral subgroup of $\Sigma$ is finite.
\end{lem}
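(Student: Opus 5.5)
The plan is to mimic the proof of Lemma~\ref{lem:peripheral-obstruction}, now with two local pieces. Write $\Sigma = R_1 \# P$, where $P$ is an unknotted projective plane. Then
\[(S^4,\Sigma)\cong (B^4_1,R_1^\circ)\cup_{(S^3,U)}(B^4_2,P^\circ),\]
glued along a $3$-sphere meeting $\Sigma$ in an unknot $U$. Here $R_1^\circ$ is a M\"obius band in $B^4_1$, and $P^\circ$ is a boundary-parallel-type M\"obius band in $B^4_2$. Seifert--van Kampen gives
\[\pi_1(S^4\setminus\Sigma)\cong \pi_1(B^4_1\setminus R_1^\circ)\ast_{C_\infty\{\mu\}}\pi_1(B^4_2\setminus P^\circ),\]
since $\pi_1(S^3\setminus U)\cong C_\infty\{\mu\}$ is generated by a meridian. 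Moreover $\pi_1(B^4_2\setminus P^\circ)\cong C_2\{\mu\}$, because $P^\circ$ is obtained from $P$ by deleting a trivial disk and the complement of an unknotted projective plane has $\pi_1\cong C_2$.

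Next I decompose the boundary. Split $\partial N(\Sigma)$ into $E_1:=\partial N(\Sigma)\cap B^4_1$ and $E_2:=\partial N(\Sigma)\cap B^4_2$, which are circle bundles over M\"obius bands. They are glued along the torus $\partial N(U)\cap S^3$. Take the basepoint $z$ on this torus. Then every loop of $\pi_1(\partial N(\Sigma),z)$ is a product of loops lying in $E_1$ or in $E_2$, since van Kampen applies to $\partial N(\Sigma)=E_1\cup E_2$. Hence the peripheral subgroup is generated by the images of $\pi_1(E_1)$ and $\pi_1(E_2)$.

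The image of $\pi_1(E_2)$ lies in the image of $C_2\{\mu\}$, so it is $\{e,\mu\}$. The image of $\pi_1(E_1)$ lies in the peripheral subgroup $P_1$ of the M\"obius band $R_1^\circ$ in $B^4_1$. I would compare $P_1$ with the peripheral subgroup of the closed projective plane $R_1=R_1^\circ\cup(\text{trivial disk})$. Alternatively, I would work directly with $\pi_1(E_1)$: it is a circle bundle over a M\"obius band, so it is generated by the fiber $\mu$ and a lift $\ell$ of the core. The relation $\ell\mu\ell^{-1}=\mu^{\pm1}$ holds, since the normal bundle of the band is nonorientable.

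The crucial point is that $\mu^2=e$ in the whole group, because $\mu$ already has order $2$ on the $B^4_2$ side. So the peripheral subgroup of $\Sigma$ is generated by $\mu$ and $\ell$ subject to $\mu^2=e$ and $\ell\mu\ell^{-1}=\mu^{\pm 1}$. It is then enough to show that $\ell$ has finite order in $\pi_1(S^4\setminus\Sigma)$.

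I expect this to be the main obstacle. To handle it, I would push $\ell^2$, which runs twice around the core of the M\"obius band, to the boundary circle $U$ of $R_1^\circ$. There $\ell^2$ becomes a longitude-type curve of $U$ in $\partial N(U)\cap S^3$, differing from it by a power of $\mu$ that depends on the framing. A longitude of the unknot $U$ is nullhomotopic in $S^3\setminus U$, hence trivial in the amalgam. Combined with $\mu^2=e$, this gives $\ell^4=e$, or at least that $\ell^2$ is a power of $\mu$. Therefore the peripheral subgroup is a quotient of a finite group of order at most $8$, such as $\langle \mu,\ell\mid \mu^2,\ell^2\mu^k,\ell\mu\ell^{-1}\mu^{\mp1}\rangle$, and so it is finite. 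The delicate check is the bookkeeping in pushing $\ell^2$ across the band to $\partial N(U)$, which requires choosing $\ell$ compatibly with the basepoint on the gluing torus.
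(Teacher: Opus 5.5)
Your proposal is correct and is essentially the paper's argument. Both split $\partial N(\Sigma)$ into the two circle bundles over M\"obius bands glued along $\partial N(U)$, use the relation writing the longitude of $U$ as $\ell^{2}\mu^{k}$ in the fundamental group of each piece, and use $\pi_1(B^4_2\setminus P^\circ)\cong C_2$ to get $\mu^2=e$ and to control the unknotted side.

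The only difference is how you get $\ell^2\in\langle\mu\rangle$. You use that every loop on $\partial N(U)$ is a power of $\mu$ in $\pi_1(S^3\setminus U)\cong C_\infty$. The paper instead uses the gluing relation $w_1^2v_1^{k_1}=w_2^2v_2^{k_2}$ together with $w_2^2\mapsto e$. The basepoint bookkeeping you flag as delicate is routine: $\lambda_U\ell^{-2}$ lies in the kernel $\langle\mu\rangle$ of the map $\pi_1(E_1)\to\pi_1(R_1^\circ)\cong C_\infty$ induced by the bundle projection.
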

\begin{proof}
    Suppose $\Sigma$ is the connected sum of a projective planes $P_1$, $P_2$ with $P_2$ unknotted, so $(S^4 , \Sigma ) = (B_1 ^4 , P_1 ^\circ ) \cup (B_2 ^4 , P_2 ^\circ )$ as pairs of unoriented manifolds,
    where the superscript `$^\circ$' denotes a tangle obtained by deleting a trivial disk.
    Here, we identify the boundaries of $(B^4_1, P_1 ^\circ )$ and $(B^4_2 , P_2 ^\circ)$ with $(S^3 , U)$,
    where $U$ is the unknot,
    and we glue by the identity of $(S^3,U)$.

    Let $N(U)$ be a tubular neighborhood of $U$ in $S^3$.
    Denote as $\partial_n N(P_i ^\circ)$ the subspace of the boundary $\partial N(P_i ^\circ)$ given by the normal circle bundle over $P_i ^\circ$
    (hence $\partial N(P_i^\circ ) = N(U) \cup \partial_n N(P_i^\circ)$).
    Then $\partial N(U)$ is the boundary of $\partial_n N(P_i ^\circ)$ for $i=1,2$,
    and $\partial N(\Sigma) = \partial_n N(P_1 ^\circ) \cup \partial_n N(P_2 ^\circ)$ as unoriented manifolds.
    
    Choose a basepoint $z\in \partial N(U)$, and let $\lambda_U , \mu_U$ be the longitude and meridian of $U$.
    Let us first compute
    \begin{equation}\label{eq:torus-bdrymobius}
        \pi_1 (\partial N(U),z) \to \pi_1 (\partial_n N(P_i ^\circ ), z).
    \end{equation}
    We use that $\partial_n N(P_i ^\circ)$ is the mapping torus of the map $r\times r:[0,1] \times S^1 \to [0,1]\times S^1: (x,y)\mapsto (1-x,1-y)$ (recall we identify $S^1 = [0,1]/(0\sim 1)$);
    under this identification
    the subspace $\partial N(U)$ of  $\partial_n N(P_i ^\circ) $ corresponds to the mapping torus of $\{0, 1\} \times S^1$.
    Hence, the map (\ref{eq:torus-bdrymobius}) is
    \[
     C_\infty \{ \lambda _U \} \times C_\infty \{\mu _U\} \to \left\langle v_i ,  w_i  \middle| w_i v_i w_i ^{-1} v_i \right\rangle: \lambda_U \mapsto w_i ^2 v_i^{k_i},\ \mu _U \mapsto v_i
    \]
    for some $k_i \in \mathbb{Z}$.
    Since we glue $\partial_n N(P_1 ^\circ)$ and $\partial_n N(P_2 ^\circ)$ by the identity of $\partial N(U)$,
    by Seifert--van Kampen we have
    \[
    \pi_1 (\partial N(\Sigma),z) \cong \left\langle v_1 , v_2 , w_1 , w_2 \middle| w_1 v_1 w_1 ^{-1} v_1 , w_2 v_2 w_2 ^{-1} v_2 , v_1 v_2^{-1} , w_1^2 v_1 ^{k_1} (w_2^2 v_2^{k_2})^{-1}\right\rangle .
    \]

    Since $P_2$ is an unknotted projective plane,
    we have $\pi_1 (B_2 ^4 \setminus P_2 ^\circ , z) \cong C_2 \{\mu _U\}$.
    Hence the map
    \begin{equation}\label{eq:mobius-peripheral}
        \pi_1 (\partial_n N(P_2 ^\circ),z)\to \pi_1 (B_2 ^4 \setminus P_2 ^\circ , z) \cong C_2 \{\mu _U\}
    \end{equation}
    sends $v_2 \mapsto \mu _U$, $v_2 ^2 , w_2 ^2  \mapsto 1 $.
    Thus the peripheral map
    \begin{equation}\label{eq:peripheral-klein}
    \pi_1 (\partial N(\Sigma), z) \to \pi_1 (S^4 \setminus \Sigma, z)
    \end{equation}
    factors through $\pi_1 (\partial N(\Sigma),z)/ \left\langle \left\langle v_2 ^2 , w_2^2  \right\rangle \right\rangle $, which is 
    \begin{multline*}
     \left\langle v_1 , v_2 , w_1 , w_2 \middle| w_1 v_1 w_1 ^{-1} v_1 , w_2 v_2 w_2 ^{-1} v_2 , v_1 v_2^{-1} , w_1^2 v_1 ^{k_1} (w_2^2 v_2^{k_2} )^{-1}\right\rangle \bigg/ \left\langle \left\langle v_2 ^2 , w_2^2  \right\rangle \right\rangle \\
    = \left\langle  v_2 , w_1 , w_2 \middle| [w_1 , v_2], [w_2 ,v_2],  w_1^2  v_2^{k_1 + k_2} ,w_2^2 , v_2 ^2 \right\rangle .
    \end{multline*}
    Call this group $G$.
    Since the map (\ref{eq:mobius-peripheral}) maps $v_2$ to $\mu _U$,
    its kernel contains $w_2$ or $v_2 w_2$.
    Hence the map (\ref{eq:peripheral-klein}) factors through
    $G/\left\langle \left\langle w_2 \right\rangle \right\rangle$ or $G/\left\langle \left\langle v_2w_2 \right\rangle \right\rangle$.
    Both are isomorphic to 
    \[
    \left\langle  v_2 , w_1  \middle| [w_1 , v_2],  w_1^2  v_2^{k_1 + k_2} , v_2 ^2 \right\rangle =  C_4\{w_1\} \ \mathrm{or}\ C_2 \{v_2 \} \times C_2 \{w_1 \},
     \]
     depending on the parity of $k_1+k_2$. Both groups are finite, 
    hence the image of the peripheral map (\ref{eq:peripheral-klein}) is finite.
\end{proof}

We will show that the peripheral subgroup of the Klein bottle $\Sigma := R\# R$ is infinite; this implies Theorem~\ref{thm:klein} by Lemma~\ref{lem:reducible-klein}.
Let us first record the following lemma.

\begin{lem}[{\cite[Proposition~3, Section~I.3]{MR607504}}]
\label{lem:amal}
    Let $H,G_{1},G_{2}$ be groups, let $H_i \subset G_i$ be subgroups for $i=1,2$, and let $f_{i}:H\to H_{i}$ be injective homomorphisms for both $i=1,2$.
    Then the induced map $H_{1}\ast_{H}H_{2} \to G_{1}\ast_{H}G_{2}$ is injective.
\end{lem}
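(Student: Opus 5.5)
The plan is to use the normal form theorem for amalgamated free products (Serre, Section~I.1), via reduced words. Write $P := G_1 \ast_H G_2$ and $Q := H_1 \ast_H H_2$, and identify $H$ with its images $f_1(H)\subset H_1$ and $f_2(H)\subset H_2$. The map $Q\to P$ is the one induced by the inclusions $H_i\subset G_i$. It is well defined because the two composites $H\to H_i\subset G_i$ are just $f_i$.

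Before comparing words, I need $G_i \to P$ to be injective, so that products of letters from $G_1$ and $G_2$ really live in an honest amalgam. This follows from Lemma~\ref{lem:alg-thm2}, since each composite $H\to G_i$ is injective. The same lemma also shows that $H_i\to Q$ is injective.

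The core step compares normal forms. By the normal form theorem, an element $q\in Q$ other than an element of $H$ can be written as a reduced word $q = h_1 h_2\cdots h_n$. Here the letters lie alternately in $H_1\setminus H$ and $H_2\setminus H$ (or $n=1$ with $h_1 \in H_1\cup H_2$ not in $H$). The key observation is that $H_i\setminus H \subset G_i\setminus H$, because $H\cap H_i = H$ inside $G_i$. So the same word, read in $P$, is still a reduced word with alternating letters in $G_1\setminus H$ and $G_2\setminus H$. Serre's theorem (a reduced word of length $\ge 1$ with letters outside $H$ represents a nontrivial element; for $n=1$, use injectivity of $G_i\to P$) then shows that its image in $P$ is not the identity.

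If instead $q\in H$ is nontrivial, its image is $f_1(q)\in G_1$. This is nontrivial by injectivity of $f_1$, hence nontrivial in $P$ by Lemma~\ref{lem:alg-thm2}. So the kernel of $Q \to P$ is trivial.

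The only delicate point, which I expect to be the main (mild) obstacle, is making sure "reduced" transfers correctly. Specifically, I must confirm that a letter of $H_i$ not lying in $H$ remains outside $H$ when viewed in $G_i$, i.e.\ that $f_i(H)\cap H_i = f_i(H)$ is the same subgroup whether the ambient group is $H_i$ or $G_i$. This is immediate because $f_i$ lands in $H_i$. I will state the normal form theorem in the alternating-word form, "if $n\ge1$ and each $h_j\notin H$ with consecutive letters from different factors, then the product is nontrivial," so that no choice of coset representatives is needed.
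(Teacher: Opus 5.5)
Your proof is correct. It is the standard normal-form argument behind the result the paper cites from Serre; the paper itself gives no proof beyond the citation. Using the criterion ``reduced alternating words are nontrivial'' instead of explicit coset representatives is only a cosmetic difference, and the point you flag as delicate is automatic here, since $f_i(H)\subset H_i$ means a letter of $H_i\setminus f_i(H)$ stays outside $f_i(H)$ when viewed in $G_i$.
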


We use Lemma~\ref{lem:uv-htpe} to study the peripheral subgroup;
we need to define some objects before we can state it. (These definitions are similar to the definitions in the proof of Lemma~\ref{lem:reducible-klein}.)
Recall the basepoint $\overline{x'}$, the tubular neighborhood $N(P)$, and the loops $u',v'$ from the proof of Theorem~\ref{thm:main-thm} (Figure~\ref{fig:pq-link}~(c)).
Let $(B^4 , P^\circ)$  be obtained by removing the southern hemisphere $\mathring{B}_S^4 \subset S^4$;
the boundary of $P^\circ$ is an unknot $U\subset S^3$.
Let $n_p \subset D^2 \times [-1,1]$ be the neighborhood of $p$ drawn in Figure~\ref{fig:pq-link}~(c).
Let $\partial_n n_p \subset \partial n_p$ be the normal circle bundle over $p$; hence $\partial n_p = \partial_n n_p \cup (n_p \cap (D^2 \times \{-1\}))$.
Let $\partial_n N(P^\circ)$ be the half-spin of $\partial_n n_p$,
and let $N(U)$ be the half-spin of $n_p \cap (D^2\times \{-1\})$.
Then $N(U)$ is a tubular neighborhood of $U$ in $S^3$ and we have $\partial N(P^\circ) = \partial_n N(P^\circ) \cup N(U)$.
The basepoint $\overline{x'}$ and the loop $v'$ lie in $\partial N(U)$,
and in fact $v'$ is the meridian of $U$.

\begin{lem}\label{lem:uv-htpe}
The loops $u'$ and $v'$ can be homotoped in $B^4 \setminus (P^\circ \sqcup \mathring{N}(Q)) $ onto $\partial_n N(P^\circ )$.
\end{lem}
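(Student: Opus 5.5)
Both loops $u',v'$ and the target $\partial_n N(P^\circ)$ live in $\{0\}\times D^2\times[-1,1]$ or in the half-spin of subsets of it. The plan is to do every homotopy inside the slice $D^2\times[-1,1]$ of Figure~\ref{fig:pq-link}~(c), in the complement of $p\sqcup n_q$. Such a homotopy in $\{0\}\times(D^2\times[-1,1]\setminus(p\sqcup \mathring{n}_q))$ is automatically a homotopy in $B^4\setminus(P^\circ\sqcup\mathring{N}(Q))$, since that slice sits inside the half-spin region $S^1\times D^2\times[-1,1]$ and misses $P^\circ\sqcup\mathring N(Q)$. The slice $\{0\}\times\partial_n n_p$ is contained in $\partial_n N(P^\circ)$, so it suffices to homotope $u'$ and $v'$, rel basepoint if convenient, within $(D^2\times[-1,1])\setminus(p\sqcup\mathring{n}_q)$ onto the annulus $\partial_n n_p$.

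I would first move the basepoint. The point $x'$ lies on $n_p\cap(D^2\times\{-1\})$, on the circle $\partial(\partial_n n_p)$. I would slide it along a short arc in $\partial n_p$ into the interior of $\partial_n n_p$. Since the lemma only asks for a homotopy of the loops, free homotopies (or homotopies with the basepoint moving along an arc) are acceptable. Moving the basepoint along an arc inside $\partial_n N(P^\circ)$ preserves the conclusion.

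For $v'$: it is the meridian of $U$, i.e.\ the half-spin of the boundary of a small disk of $n_p\cap(D^2\times\{-1\})$ around an endpoint of $p$. In the slice it is a small circle in $D^2\times\{-1\}$ linking the endpoint of $p$. Pushing it slightly upward into $\mathring D^2\times(-1,1)$ turns it into a meridian circle of $p$ near its endpoint, which lies on $\partial_n n_p$. This pushing takes place within $n_p$ away from $p$, so it clearly avoids $n_q$.

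For $u'$: by Figure~\ref{fig:pq-link}~(c), $u'$ is a loop on $\partial n_p$ that, after changing basepoint, represents the meridian $u$ of $q$. In $(D^2\times[-1,1])\setminus(p\sqcup q)$, with $\pi_1=C_\infty\{u\}\times C_\infty\{v\}$ as in Lemma~\ref{lem:pi1-maps-pq}, its class is of the form $u\,v^k$. The task is to exhibit it explicitly as a curve on the annulus $\partial_n n_p$, or rather on the tube around $p$ that is obtained after isotoping $p$ and $q$. I would isotope (in the picture) the portion of $\partial_n n_p$ that runs alongside $q$ so that $u'$, which is drawn on $\partial n_p$, slides off the bottom disk $n_p\cap(D^2\times\{-1\})$ onto the side tube. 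Then I would check that this slide sweeps through no point of $p$ or $n_q$.

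The main obstacle is the $u'$ step. The curve is pinned to the specific geometry of Figure~\ref{fig:pq-link}~(c), and the needed homotopy must avoid $n_q$, not merely $q$. My first attempt would be to note that $\partial n_p$ is a sphere and $u'$ sits on it. Any loop on $\partial n_p$ is homotopic within $\partial n_p$ (which misses $p$ and $n_q$) to a loop on $\partial_n n_p$, because the bottom disk $n_p\cap(D^2\times\{-1\})$ is contractible and can be pushed into its collar in $\partial_n n_p$. This is a deformation retraction of the sphere minus a point onto the annulus, provided $u'$ misses some point of the bottom disk. If $u'$ meets the bottom disk only in arcs, I would shrink them off rel endpoints into the annulus. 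This argument works for $v'$ as well, so I would present it uniformly, using the figure only to confirm that $u',v'\subset\partial n_p$ and that neither loop fills the bottom disk.
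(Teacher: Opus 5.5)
\textbf{There is a genuine gap in the $u'$ step.} Your reduction to a homotopy inside the slice $\{0\}\times D^2\times[-1,1]$ replaces the lemma by a false statement.

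Your premise that $u'$ lies on the $2$-sphere $\partial n_p$ would make $u'$ null-homotopic. But $u'$ represents the meridian $u$ of $q$, which generates an infinite cyclic factor of $\pi_1((D^2\times[-1,1])\setminus(p\sqcup q))\cong C_\infty\{u\}\times C_\infty\{v\}$ (proof of Lemma~\ref{lem:pi1-maps-pq}). Your own observation that its class is $uv^k$ already contradicts the premise. In fact $u'$ is an arc along the tube $\partial_n n_p$ from one foot of $p$ to the other, closed up by a \emph{horizontal} arc in $D^2\times\{-1\}$. That arc leaves $n_p$ and joins the two end disks $n_p\cap(D^2\times\{-1\})$, which are disjoint.

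No homotopy inside the slice can fix this. The slice complement $(D^2\times[-1,1])\setminus(p\sqcup\mathring{n}_q)$ has abelian fundamental group $C_\infty\{u\}\times C_\infty\{v\}$. Every loop on the annulus $\partial_n n_p$ represents a power of the meridian of $p$, i.e.\ of $v$, while $u'$ has $u$-exponent $1$. Isotoping $p$ and $q$ is not allowed either, since $P^\circ$ and $N(Q)$ are fixed.

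\textbf{The missing idea is that the homotopy must leave the slice.} The tube part of $u'$ already lies in $\partial_n N(P^\circ)$. So does all of $v'$: it is a boundary circle of an end disk, hence lies in $\partial N(U)\subset\partial_n N(P^\circ)$, and no pushing is needed. Only the horizontal arc must be moved.

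That arc lies in the boundary sphere $\partial B^4=S^3\times\{-1\}$, in $S^3\setminus U$, with endpoints on $\partial N(U)$. Now $S^3\setminus U$ deformation retracts onto the solid torus $S^3\setminus\mathring{N}(U)$, and $\pi_1(\partial N(U))\to\pi_1(S^3\setminus\mathring{N}(U))$ is surjective. Hence the arc is homotopic rel endpoints, within $S^3\setminus U$, to an arc in $\partial N(U)\subset\partial_n N(P^\circ)$. This homotopy takes place in $\partial B^4\setminus U$, so it avoids $P^\circ$ and $N(Q)$. This is the paper's argument.

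The new arc cannot stay in the slice. $\partial N(U)$ meets $\{0\}\times D^2\times\{-1\}$ in the two circles $\partial(n_p\cap(D^2\times\{-1\}))$, which are interchanged by $\varphi_\pi$. An arc in $\partial N(U)$ joining them must therefore wind an odd number of times around the $S^1$ factor of $S^1\times D^2\times[-1,1]$. No slice-wise argument can produce that.
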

\begin{proof}
    The loop $v'$ is already in $\partial N(U) \subset \partial_n N(P^\circ )$.
    Let us show the statement for $u'$.
    The intersection of $u'$ with $D^2\times \{-1\}$ (i.e.\ the ``horizontal part'' of $u'$ from Figure~\ref{fig:pq-link}~(c)) is the only part of $u'$ that does not already lie in $\partial_n N(P^\circ )$.
    This horizontal part is a path in $S^3 \setminus U$ whose two endpoints are on $\partial N(U)$,
    and any such path can be homotoped rel.\ $\partial$ in $S^3\setminus U$ onto $\partial N(U) \subset \partial_n N(P^\circ )$.
    Hence the lemma follows.
\end{proof}

Recall that $R$ is the image of $P$ in $(S^{4}\setminus\mathring{N}(Q))\cup W \cong S^4$.
Let $R^\circ$ and $\partial_n N(R^\circ)$ be the images of 
$P^\circ$ and $\partial_n N(P^\circ)$, respectively,
in $(B^{4}\setminus\mathring{N}(Q))\cup W \cong B^4$.
    
\begin{proof}[Proof of Theorem~\ref{thm:klein}]
    Let $\Sigma := R\#R$; so $(S^4 , \Sigma )$ is the union of two copies $ (B_1 ^4 , R_1 ^\circ ) $ and $ (B_2 ^4 , R_2 ^\circ )$ of $(B^4 , R^\circ)$,
    glued by an orientation-reversing diffeomorphism of their boundary $(S^3, U)$.
    We have $\overline{x'} \in \partial B_i^4 \setminus R_i^\circ$ for both $i=1,2$.
    Let $u_i ', v_i ',\partial_n N(R_i^\circ)$ be the copies of $u',v',\partial_n N(R^\circ)$, respectively, in $B_i ^4 \setminus R_i ^\circ$.
    By Lemma~\ref{lem:uv-htpe} the loops $u_i',v_i'$ can be homotoped in $B_i^4 \setminus R_i^\circ  $ onto $\partial_n N(R_i ^\circ )$.
    Since $\partial N(\Sigma)  = \partial_n N(R_1 ^\circ )\cup \partial_n N(R_2 ^\circ)$, the loops $u_i',v_i'$ can be homotoped in $S^4 \setminus \Sigma $ onto $\partial N(\Sigma)$.
    Hence, $u_i' , v_i'\in \pi_1 (S^4 \setminus \Sigma , \overline{x'})$ lie in the peripheral subgroup.

    Let us show that the subgroup of $\pi_1 (S^4 \setminus \Sigma , \overline{x'})$ that $u_1 ', u_2 ', v_1 ', v_2 '$ generate is infinite.
    Let $G_i := \pi_1 (B_i ^4 \setminus R^\circ_i , \overline{x'} )$,
    and let $H_i \subset G_i $ be the subgroup generated by $u_i '$ and $v_i '$.
    Then by Theorem~\ref{thm:main-pi1-comp} we have $H_i = C_2 \{u_i '\} \times C_2 \{v_i '\}$.
    The complement $S^4 \setminus \Sigma $ is obtained by gluing $B_1 ^4 \setminus R_1^\circ $ and $B_2 ^4 \setminus R_2^\circ $ along their boundary $S^3 \setminus U$, and so
    by Seifert--van Kampen we have
    \begin{equation*}
    \pi_1 (S^4 \setminus \Sigma,\overline{x'})
    \cong G_1 \ast _{C_\infty \{s\}} G_2,
    \end{equation*}
    where $C_\infty \{s\}\to G_i $ is given by $s\mapsto 
    {v_i '}^{\pm 1}$
    (recall that $v_i'$ is the meridian of $U$ in $S^3$).
    Since $v_i '^2$ is trivial in $G_i$, we have
    \[
    G_1 \ast _{C_\infty \{s\}} G_2 = G_1 \ast _{C_2 \{s\}} G_2.
    \]
    Hence, by Lemma~\ref{lem:amal},
    \[
    H_1 \ast _{C_2 \{s\}} H_2 \to G_1 \ast _{C_2 \{s\}} G_2 
    \]
    is injective.
    Moreover,
    \[H_1 \ast _{C_2 \{s\}} H_2  = (C_2 \{ u_1 '\} \ast C_2 \{ u_2 '\} )\times C_2 \{v_1 '\},\]
    where $v_1 '$ and $v_2 '$ are identified;
    hence $H_1 \ast _{C_2 \{v'\}} H_2$ is infinite.
    This group is generated by $u_1 ', u_2 ', v_1 ', v_2 '$ and is thus contained in the peripheral subgroup.
    Hence the peripheral subgroup is infinite and so Theorem~\ref{thm:klein} follows from Lemma~\ref{lem:reducible-klein}.
\end{proof}

\begin{rem}
    The normal Euler number of the Klein bottle is irrelevant to the proofs of this subsection.
    In particular, the same arguments show that $R\# (-R)$ (which has normal Euler number $0$) is an irreducible Klein bottle.
    Indeed, Lemma~\ref{lem:reducible-klein} is for Klein bottles of any normal Euler number.
    Furthermore, the proof of Theorem~\ref{thm:klein} also works for the Klein bottle $R\# (-R)$:
    the only difference is that now we glue $ (B_1 ^4 , R_1 ^\circ ) $ and $-(B_2 ^4 , R_2 ^\circ )$,
    and the computations are the same.
    Hence, the peripheral subgroup for $R\# (-R)$ is also infinite.
\end{rem}

\bibliographystyle{amsalpha}
\bibliography{bib}

\end{document}